\newtheorem{theorem}{Theorem}[section]
\newtheorem{lemma}[theorem]{Lemma}
\newtheorem{proposition}[theorem]{Proposition}
\newtheorem{corollary}[theorem]{Corollary}
\newtheorem{fact}[theorem]{Fact}
\theoremstyle{definition}
\newtheorem{definition}[theorem]{Definition}
\newtheorem{question}[theorem]{Question}
\theoremstyle{remark}
\newtheorem{claim*}[theorem]{Claim}
\newenvironment{proofofclaim}{{\itshape Proof of Claim.}}{\hfill\copyright}
\newcommand{\cfi}{\mathrm{cf}}
\newcommand{\soast}[2]{\mathord \{#1 \mid #2\}}
\newcommand{\bigsoast}[2]{\mathord \big\{#1 \bigm\vert #2\big\}}
\newcommand{\Bigsoast}[2]{\mathord \Big\{#1 \Bigm\vert #2\Big\}}
\newcommand{\soafft}[2]{{}^{#1} #2}
\newcommand{\seq}[2]{\langle #1 \mid #2\rangle}
\newcommand{\cf}{cf.}
\newcommand{\ie}{i.e.}
\newcommand{\card}[1]{\mathord |#1|}
\newcommand{\pwim}[2]{
#1\big[#2\big]
}
\newcommand{\ifff}{\text{ if and only if }}
\newcommand{\dfeq}{:=}
\newcommand{\wlg}{w.l.o.g.}
\newcommand{\disj}{\mathrel{\scriptstyle\vee}}
\newcommand{\doesntarrow}{{\mspace{7mu}\not\mspace{-7mu}\longrightarrow}}
\newcommand{\squarebinom}[2]{\biggl[\begin{matrix} #1 \\ #2 \end{matrix}\biggr]}
\newcommand{\sqtarg}[4]{
\begingroup
\renewcommand*{\arraystretch}{0.4}
\arraycolsep=2dd
\biggl[\begin{matrix} #1 && #3 \\ & \disj \\ #2 && #4 \end{matrix}\biggr]
\endgroup
}
\newcommand{\bigpolar}[2]{
\begingroup
\renewcommand*{\arraystretch}{0.4}
\arraycolsep=2dd
\biggl(\begin{matrix} #1 & \\ & \,:\, i<m\\ #2 & \end{matrix}\biggr)
\endgroup
}
\newcommand{\stick}{{\ensuremath \mspace{2mu}\mid\mspace{-12mu} {\raise0.6em\hbox{$\bullet$}}}}
\newcommand{\whitestick}[1]{{\ensuremath \mspace{2mu}\mid\mspace{-16.5mu} {\raise.95em\hbox{$\bigcirc \mspace{-13mu} \raise.05em\hbox{$ \scriptscriptstyle #1$}$}}}}
\newcommand{\calabash}{{\ensuremath \mspace{2mu}\mid\mspace{-12mu} {\raise-0.6em\hbox{$\bullet$}}}}
\newcommand{\match}{{\ensuremath \mspace{2mu}\mid\mspace{-3mu}\mid\mspace{-12.9mu} {\raise0.6em\hbox{$\bullet$}}}}
\newcommand{\matchop}{{\ensuremath \mspace{2mu}\mid\mspace{-3mu}\mid\mspace{-12.9mu} {\raise-0.6em\hbox{$\bullet$}}}}
\newcommand{\opair}[2]{
\mathord \langle #1, #2\rangle
}
\newcommand{\iz}{is}
\newcommand{\ou}{ou}
\newcommand{\re}{re}
\DeclareMathOperator{\cof}{\mathsf{cof}}
\DeclareMathOperator{\cov}{\mathsf{cov}}
\DeclareMathOperator{\add}{\mathsf{add}}
\DeclareMathOperator{\non}{\mathsf{non}}
\DeclareMathOperator{\meagre}{\mathcal{M}}
\DeclareMathOperator{\mzero}{\mathcal{N}}
\DeclareMathOperator{\MA}{\mathsf{MA}}
\DeclareMathOperator{\CH}{\mathsf{CH}}
\DeclareMathOperator{\GCH}{\mathsf{GCH}}
\DeclareMathOperator{\ZFC}{\mathsf{ZFC}}
\def\mathunderaccent#1#2 {\let\theaccent#1\skewfactor#2
\mathpalette\putaccentunder}
\def\putaccentunder#1#2{\oalign{$#1#2$\crcr\hidewidth
\vbox to.2ex{\hbox{$#1\skew\skewfactor\theaccent{}$}\vss}\hidewidth}}
\def\name{\mathunderaccent\tilde-3 }
\begin{document}
\title{Cardinal Characteristics of the Continuum and Partitions}
\author{William Chen}
\address{Department of Mathematics, Ben-Gurion-University of the Negev \\
Beersheba, 8410501, Isra\"el}
\email{chenwb@gmail.com}
\author{Shimon Garti}
\address{Institute of Mathematics, The Hebrew University of Jerusalem \\
Jerusalem, 91904, Isra\"el}
\email{shimon.garty@mail.huji.ac.il}
\author{Thilo Weinert}
\address{DMG/Algebra, TU Wien \\
            Wiedner Hauptstrasse 8-10/104 \\
            1040 Wien \\
            Autriche}
\email{aleph\_omega@posteo.net}
\thanks{A portion of this research was undertaken while both the first and the last author were postdoctoral fellows at the Ben-Gurion-University of the Negev. They would like to thank the Ben-Gurion University of the Negev and the Israel Science Foundation which supported this research(grant \#1365/14).
}
\subjclass[2010]{03E02 (03E05 03E17)}

\begin{abstract}
We prove that for regular cardinals $\kappa$, combinations of the stick principle at $\kappa$ and certain cardinal characteristics at $\kappa$ being $\kappa^+$ causes the partition relations such as $\omega_1 \longrightarrow (\omega_1, \omega + 2)^2$ and $(\kappa^+)^2 \longrightarrow (\kappa^+\kappa, 4)^2$ to fail. Polar\iz ed partition relations are also considered, and the results are used to answer several problems posed by Garti, Larson and Shelah.
\end{abstract}
\maketitle

\section{Introduction}
\label{section : introduction}

In this paper we consider the effect of cardinal characteristics of the continuum on partition relations. Connections between these two areas have been investigated before, a recent contribution is \cite{014RT0}. For a general overview of cardinal characteristics, \cf\ \cite{010B0, 012H1}, for partition relations \cf\ \cite{977W0, 984EHMR0, 011HL0}.

For an ordered set $X$ and an ordinal $\alpha$, let $[X]^{\alpha}$ denote the subsets of $X$ of order-type $\alpha$. In Rado's notation, \cf\ \cite{956ER0}, given ordinal parameters,
\begin{align}
\alpha\longrightarrow (\beta_i:i<m)^k_{m}
\end{align}
is the statement that for any $f:[\alpha]^k\rightarrow m$, there are $i<m$ and $X\in [\alpha]^{\beta_i}$ for which $f\upharpoonright [X]^k$ is constant with value $i$ (we say that $X$ is \textit{homogeneous} for $i$).

We will also consider polar\iz ed partition relations. There,
\begin{align}
\binom{\alpha}{\beta} \longrightarrow \bigpolar{\gamma_i}{\delta_i}^{1,1}_m
\end{align}
means that for every $f:\alpha\times\beta\rightarrow m$, there are $i<m$,  $X\in [\alpha]^{\gamma_i}$, and $Y\in [\beta]^{\delta_i}$ so that $f\upharpoonright [X\times Y]$ is constant with value $i$.

The notation is flexible in several ways. Replacing $\longrightarrow$ with $\doesntarrow$ gives the negation of the original statement. If there is $\beta$ so that $\beta_i=\beta$ for all $i<m$, then we will drop the indexing on the $\beta_i$. In this case, if the number of col\ou rs is $2$, we often omit the lower subscript. Replacing the parentheses on the right-hand side by square brackets weakens the statement so that we do not require $X$ to be homogeneous but just so that $f[X]\neq m$ (i.e., $X$ misses one col\ou r, instead of missing all col\ou rs but one, \cf\ \cite[Definition 18.1]{965EHR0}).

Finally, we can replace $\beta_i$ by $(\beta_i:\gamma_i)$. This notation means that instead of asking for a homogeneous set for $i$ of order-type $\beta_i$, we ask for $X$ of order-type $\beta_i$ and $Y$ of order-type $\gamma_i$ so that $x<y$ and $f(x,y)=i$ for every $x\in X$ and $y\in Y$. This is weaker than requiring a homogeneous set for $i$ of order-type $\beta_i+\gamma_i$.

In early work in this area, many negative partition relations involving ordinals $\leqslant \omega_1$ were shown to follow from the assumption of $\CH$. Subsequently, this assumption was reduced in many cases to $\mathfrak{cc}=\aleph_1$, where $\mathfrak{cc}$ is a cardinal characteristic. This gives a way of calibrating more precisely how much of $\CH$ is actually needed to prove a negative partition relation.

For a cardinal $\kappa$, the following cardinal characteristics will be used in this paper:
\begin{enumerate}
\item The \textit{unbounding number} $\mathfrak{b}_\kappa$ is the least cardinality of a family $A\subseteq \soafft{\kappa}{\kappa}$ so that for any $f\in \soafft{\kappa}{\kappa}$ there is a $g\in A$ so that $\card{\soast{\alpha < \kappa}{f(\alpha) < g(\alpha)}} = \kappa$.
\item The  \textit{dominating number} $\mathfrak{d}_\kappa$ is the least cardinality of a family $A\subseteq \soafft{\kappa}{\kappa}$ so that for any $f\in \soafft{\kappa}{\kappa}$ there is a $g\in A$ so that $\card{\soast{\alpha < \kappa}{g(\alpha) < f(\alpha)}} < \kappa$.
\item The \textit{reaping number}  $\mathfrak{r}_\kappa$ (sometimes called ``refining number'' or ``unsplitting number'') is the least cardinality of a family $A\subseteq [\kappa]^{\kappa}$ such that for any $X\in [\kappa]^\kappa$ there is $Y\in A$ so that $\min(\card{Y\cap X}, \card{Y\setminus X}) < \kappa$.
\item The \textit{splitting number} $\mathfrak{s}_\kappa$ is the least cardinality of a family $A\subseteq [\kappa]^{\kappa}$ such that for any $X\in [\kappa]^{\kappa}$ there is $Y\in A$ so that $\card{X\cap Y} = \card{X\setminus Y} = \kappa$.
\item $\cof(\meagre)$ is the least cardinality of a family $\mathcal{F}$ of meag\re\ subsets of $\soafft{\omega}{2}$ so that for any meag\re\ subset $A$ of $\soafft{\omega}{2}$ there is $B\in\mathcal{F}$ with $A\subseteq B$.
\end{enumerate}

\begin{figure}
\begin{tikzpicture}[char/.style={}, inner sep=6dd]
\pgftransformscale{1.5}
\path node (c) at (0,9) [char] {$\mathfrak{c}$}
 node (i) at (1,8) [char] {$\mathfrak{i}$}
 node (rq) at (2,7) [char] {$\mathfrak{r}_{\mathbb{Q}}$}
 node (cm) at (3,6) [char] {$\cof(\meagre)$}
 node (cn) at (3,8) [char] {$\cof(\mzero)$}
 node (d) at (2,5) [char] {$\mathfrak{d}$}
 node (r) at (1,5) [char] {$\mathfrak{r}$}
 node (u) at (1,6) [char] {$\mathfrak{u}$}
 node (b) at (1,4) [char] {$\mathfrak{b}$}
 node (am) at (1,3) [char] {$\add(\meagre)$}
 node (s) at (2,3) [char] {$\mathfrak{s}$}
 node (t) at (1,1) [char] {$\mathfrak{t}$}
 node (an) at (0,1) [char] {$\add(\mzero)$}
 node (sq) at (1,2) [char] {$\mathfrak{s}_\mathbb{Q}$}
 node (st) at (0,3) [char] {$\stick$}
 node (ao) at (1,0) [char] {$\aleph_1$};
\path	(c) edge (i)
          (c) edge (cn)
          (c) edge (u)
          (u) edge (r)
          (cn) edge (cm)
          (i) edge (rq)
          (rq) edge (r)
	(rq) edge (cm)
	(r) edge (b)
	(d) edge (b)
	(d) edge (s)
	(b) edge (am)
	(am) edge (sq)
	(sq) edge (t)
	(rq) edge (cm)
	(cm) edge (d)
	(an) edge (ao)
	(st) edge (an)
	(st) edge (t)
	(am) edge (an)
	(c) edge (st)
	(s) edge (sq)
	(t) edge (ao);
\end{tikzpicture}
\label{vandouwen}
\caption{A diagram}
\end{figure}

For every cardinal characteristic $\mathfrak{cc} \in \{\mathfrak{b}, \mathfrak{d}, \mathfrak{r}, \mathfrak{s}\}$ the meaning of $\mathfrak{cc}$ is $\mathfrak{cc}_\omega$. It can be proven by $\ZFC$ that $\mathfrak{b}_\kappa \leqslant \mathfrak{d}_\kappa$, $\mathfrak{s}_\kappa \leqslant \mathfrak{d}_\kappa$, and $\mathfrak{b}_\kappa \leqslant \mathfrak{r}_\kappa$ 
hold for indecomposable $\kappa$ (cf. Blass's chapter \cite{010B0} in the Handbook of Set Theory for $\kappa = \omega$ which easily general\iz es). These characteristics each have value at most $2^\kappa$, moreover, if $\kappa$ is both regular and uncountable, then $\mathfrak{s}_\kappa \leqslant \mathfrak{b}_\kappa$, \cf\ \cite{017RS0}.

We also consider another class of cardinal characteristics, sticks, we follow the terminology established in \cite{006B1}.
\begin{align}
\label{generalised stick-definition}
\stick(\kappa, \lambda) \dfeq &\min\soast{\card{X}}{X \subseteq [\lambda]^\kappa \wedge \forall y \in [\lambda]^\lambda \exists x \in X : x \subseteq y} \text{ for both } \kappa \text{ and } \lambda \text{ cardinals with } \kappa \leqslant \lambda.
\end{align}
We will write $\stick(\kappa)$ for $\stick(\kappa, \kappa^+)$. $\stick$ simply stands for $\stick(\aleph_0)$, \cf\ \cite{997FSS0} and sometimes for $\stick(\aleph_0) = \aleph_1$, \cf\ \cite{978BGKT0}. Note that 
$\stick(\kappa, \lambda) = \mu$ is denoted by $A(\lambda, \mu, \kappa, \kappa^+)$ in \cite{976B2}.

$\stick(\kappa)$ is not typically considered a cardinal characteristic of $2^\kappa$ since it involves the combinatorics at $\kappa^+$, but as such characteristics it also takes a value greater than $\kappa$ and at most $2^\kappa$. Truss \cite{983T1} showed that if $\stick = \aleph_1$, then either the covering number of the meagre ideal or the covering number of the Lebesgue-null ideal is also $\aleph_1$. Brendle \cite{006B1} further considered the relationship between $\stick$ and other cardinal characteristics.

Now we give an overview of our main results and provide some context.

By the Dushnik--Miller Theorem---\cf\ \cite[Theorem 5.23]{941DM0}---we have $\omega_1 \longrightarrow (\omega_1, \omega+1)^2$. From $\CH$, Hajnal \cite{960H0} proved that $\omega_1 \doesntarrow (\omega_1, \omega+2)^2$. 
A recent paper of Raghavan and Todor\v{c}evi\'c \cite{016RS0} shows that if there is a Suslin tree, then
$\omega_1 \doesntarrow (\omega_1, \omega+2)^2$. We can arrive at the same conclusion from the hypothesis $\stick=\aleph_1$ (Theorem \ref{stickdushnikmiller}).

Starting from $\GCH$, Erd\H{o}s and Hajnal \cite{971EH0} proved $\kappa^+\kappa \doesntarrow (\kappa^+\kappa, 3)^2$ for all cardinals $\kappa$ and Hajnal \cite{971H0} proved $\omega_1^2 \doesntarrow (\omega_1^2, 3)^2$ from $\CH$. The hypothesis of $\CH$ was reduced in several different ways.
\begin{fact}
\label{fact : previous}
\begin{enumerate}
\item (Takahashi \cite{987T1}) \label{item : takahashi} If $\stick=\aleph_1$, then $\omega_1^2 \doesntarrow (\omega_1^2, 3)^2$.
\item (Todor\v{c}evi\'c \cite{989T0}) \label{item : todorcevic} If $\mathfrak{b} = \aleph_1$, then $\omega_1 \doesntarrow (\omega_1, \omega + 2)^2$.
\item (Larson \cite{998L0}) \label{item : larson} If $\kappa$ is regular and $\mathfrak{d}_\kappa = \kappa^+$, then $\kappa^+\kappa \doesntarrow (\kappa^+\kappa, 3)^2$ and $(\kappa^+)^2 \doesntarrow \left((\kappa^+)^2, 3\right)^2$.
\item (Lambie-Hanson and Weinert \cite{017LW0}) \label{item : lh&weinert} If $\mathfrak{b}_\kappa = \stick(\kappa) = \kappa^+$, then $\kappa^+\kappa \doesntarrow (\kappa^+\kappa, 3)^2$.
\end{enumerate}
\end{fact}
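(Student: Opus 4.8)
Since the four items are quoted from the literature, this ``proof'' is really a plan for reconstructing the arguments in the cited papers, all of which follow one template. Each clause asserts $\theta \doesntarrow (\theta, \sigma)^2$ with a small $\sigma$ --- namely $\sigma = 3$ in (1), (3) and (4), and $\sigma = \omega+2$ in (2) --- so the plan is in each case to build a single colouring $c\colon [\theta]^2 \to \{0,1\}$ and to check (a) that $c$ has no $1$-homogeneous set of order type $\sigma$, and (b) that $c$ has no $0$-homogeneous set of order type $\theta$. When $\sigma = 3$, (a) just says the graph of colour $1$ is triangle-free; when $\sigma = \omega+2$ it says a $1$-homogeneous set has at most one point above the supremum of an initial $\omega$-block, and one cannot ask for less, since the Dushnik--Miller theorem already gives $\omega_1 \longrightarrow (\omega_1, \omega+1)^2$ in $\ZFC$ (so in (2) colour $1$ is visibly not triangle-free). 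In all four cases (a) is the cheap half, arranged by the definition of $c$; item (b) is where the hypothesis does its work and is the crux.

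For the items governed by a bounding or dominating number --- (2) with $\mathfrak{b} = \aleph_1$ and (3) with $\mathfrak{d}_\kappa = \kappa^+$ --- I would first fix a $\leqslant^*$-increasing scale $\seq{f_\xi}{\xi < \lambda}$ in $\soafft{\kappa}{\kappa}$ whose length $\lambda$ equals the characteristic: unbounded of length $\aleph_1$ in (2) (with $\kappa = \omega$), and dominating of length $\kappa^+$ in (3), using that $\mathfrak{d}_\kappa = \kappa^+$ together with $\kappa$ regular forces $\mathfrak{b}_\kappa = \kappa^+ = \mathfrak{d}_\kappa$, so that a genuinely $\leqslant^*$-increasing dominating sequence of length $\kappa^+$ exists. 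One then attaches a scale function to each point of $\theta$ (in (2)) or to each of the $\kappa^+$ blocks of $\theta$ (in (3)), fixes ladder systems on the relevant limit ordinals, and colours a pair $\{\alpha, \beta\}$ by comparing the two attached functions on an initial segment of coordinates whose length is read off from a walk from $\beta$ down to $\alpha$ along those ladders. The point of the construction is that a $0$-homogeneous set of the full type $\theta$, after suitable thinning, forces --- through the way the colour propagates across the blocks it meets --- a $\leqslant$-increasing $\lambda$-sequence of distinct members of $\soafft{\kappa}{\kappa}$ that is non-decreasing at every coordinate; this is impossible, since every coordinate then stabilises below $\lambda$ and the sequence would be eventually constant. (In some formulations the contradiction is phrased instead as a function the scale fails to dominate, resp.\ a witness that the scale is bounded.) This gives (b). Meanwhile, because the comparison at each pair is too local to be transitive, no $1$-coloured triangle arises in (1), (3), (4); in (2), the two top points of a putative order-type-$(\omega+2)$ homogeneous set are exactly what produce the extra contradiction past $\omega+1$.

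For the $\stick$ items --- (1) with $\stick = \aleph_1$ and (4) with $\mathfrak{b}_\kappa = \stick(\kappa) = \kappa^+$ --- I would use the $\stick$ family $\seq{S_\alpha}{\alpha < \lambda}$ ($\lambda = \aleph_1$ in (1)) as a guessing sequence: by its defining property, every candidate $0$-homogeneous set of order type $\theta$ contains some $S_\alpha$. One then builds $c$ by a recursion of length $\lambda$, where at step $\alpha$, once enough of $S_\alpha$ has been enumerated, one inserts a $1$-coloured edge inside the part of $S_\alpha$ seen so far, so that $S_\alpha$ can never be absorbed into a $0$-homogeneous set of type $\theta$, choosing this edge so as not to complete a $1$-coloured triangle --- that a safe such edge always survives is one of the delicate points, and this is where the fact that $S_\alpha$ has size only $\kappa$ is used. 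Requirement (b) then follows from the guessing property, and the recursion has exactly the right length because $\stick$ equals $\lambda$. For (4) the $\stick$ family does not see the pairs lying in distinct blocks of $\kappa^+\kappa$, so one additionally runs a $\leqslant^*$-increasing unbounded scale of length $\kappa^+$ (available since $\mathfrak{b}_\kappa = \kappa^+$) on those cross-block pairs, exactly as in the paragraph above, interleaving the two devices.

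The step I expect to be the main obstacle, in every one of the four proofs, is the tension between (a) and (b): the colour $1$ must be structurally rigid --- triangle-free, or admitting no long well-ordered homogeneous set --- and yet rich enough to meet every subset of $\theta$ of order type $\theta$. This is not bookkeeping: in the scale-based arguments it is what forces the non-transitive, walk-localised comparison of scale values, and in the $\stick$-based arguments it is what forces a recursion in which a safe spoiling edge must be shown to exist at every stage. Closely tied to it, and equally delicate, is a sharp description of how a subset of $\theta$ of order type $\theta$ must spread over the blocks --- it has to meet $\lambda$-many of them in sets that are themselves large --- since that combinatorial picture is precisely what licenses the extraction used in (b). These are the steps carried out in full in \cite{987T1, 989T0, 998L0, 017LW0}.
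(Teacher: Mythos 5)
The paper contains no proof of this Fact: it is a list of results quoted from the literature, and the source offers nothing beyond the four citations. So there is no in-paper argument to compare yours against line by line; the fair benchmarks are the cognate proofs the paper does carry out, namely Theorem \ref{stickdushnikmiller} (the $\stick$-guessing method), Theorem \ref{theorem : quadruples and scales} (the scale method) and Theorem \ref{theorem : quadruples, the unbounding number and stick} (the hybrid). Measured against those, your template is the right one in each case: a single $2$-colouring, a structural verification that colour $1$ realises no set of type $3$ (resp.\ $\omega+2$), and the hypothesis entering only to destroy every candidate $0$-homogeneous set of full type. Several of your side remarks are correct and worth having: that in item \eqref{item : todorcevic} colour $1$ cannot be triangle-free, by Dushnik--Miller; that $\mathfrak{d}_\kappa=\kappa^+$ with $\kappa$ regular forces $\mathfrak{b}_\kappa=\mathfrak{d}_\kappa=\kappa^+$ and hence yields a genuinely $\leqslant^*$-increasing dominating scale of length $\kappa^+$; and that the crux throughout is the tension between the rigidity of colour $1$ and its density in every large set.

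That said, what you have written is a reading plan, not a proof. No colouring is actually defined for any of the four items, and the two steps you yourself flag as delicate are not routine: exhibiting, at each stage of the $\stick$-recursion, a spoiling edge inside the guessed set that closes no monochromatic triangle, and extracting from a $0$-homogeneous set of type $\theta$ the function in $\soafft{\kappa}{\kappa}$ that defeats the scale. These are where the entire content of \cite{987T1}, \cite{989T0}, \cite{998L0} and \cite{017LW0} lies, and nothing in your text would let a reader reconstruct them. Two smaller slips: the order type $\kappa^+\kappa$ consists of $\kappa$ blocks of type $\kappa^+$, so your ``$\kappa^+$ blocks'' count is right only for the $(\kappa^+)^2$ half of item \eqref{item : larson}; and the ``non-decreasing at every coordinate, hence eventually constant'' endgame you propose for step (b) is one viable mechanism but not the one the paper's own scale argument uses --- there one manufactures explicit functions (the $h_0,h_1$ of Theorem \ref{theorem : quadruples and scales}) that a well-chosen scale member must dominate, which immediately produces a forbidden edge. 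Since the statement under review is itself offered as a citation, deferring to the originals is defensible here; just do not mistake this outline for a proof of any one of the four items.
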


Baumgartner and Hajnal \cite{987BH0} proved that $2^\kappa = \kappa^+ = \lambda$ for a regular $\kappa$ implies that $\lambda^2 \doesntarrow (\lambda\kappa, 4)^2$. In this paper, we reduce the hypothesis used to $\mathfrak{d}_\kappa = \kappa^+ = \lambda$ for a regular $\kappa$ (Theorem \ref{theorem : quadruples and scales}), answering a question of Larson, and to $\mathfrak{b}_\kappa = \stick(\kappa)= \kappa^+ = \lambda$ for a regular $\kappa$ (Theorem \ref{theorem : quadruples, the unbounding number and stick}). For $\kappa = \omega$ it is known that for this result as well as for Fact \ref{fact : previous} \eqref{item : larson} \& \eqref{item : lh&weinert} a hypothesis is needed as Baumgartner proved in \cite{989B0} that $\ZFC + \MA_{\aleph_1}$ implies that $\omega_1\omega \longrightarrow (\omega_1\omega, n)^2$ for all natural numbers $n$. For \ref{fact : previous} \eqref{item : todorcevic} this was shown by Todor\v{c}evi\'c in \cite{983T0} and for Fact \ref{fact : previous} \eqref{item : takahashi} this is as of now unknown.


For polar\iz ed partition relations, Garti and Shelah \cite{014GS0} proved the following:
\begin{fact}\label{fact:polarized}
\begin{enumerate}
\item If $\kappa<\mathfrak{s}$ and $\cfi(\kappa)>\omega$, then $\binom{\kappa}{\omega} \longrightarrow \binom{\kappa}{\omega}^{1,1}_2$.
\item If $\mathfrak{r}<\kappa\le \mathfrak{c}$ and $\cfi(\kappa)>\mathfrak{r}$, then $\binom{\kappa}{\omega} \longrightarrow \binom{\kappa}{\omega}^{1,1}_2$.
\item If $\mathfrak{r}=\mathfrak{c}$ and $\kappa\in [\cfi(\mathfrak{c}),\mathfrak{c}]$, then $\binom{\kappa}{\omega} \doesntarrow \binom{\kappa}{\omega}^{1,1}_2$.
\end{enumerate}
\end{fact}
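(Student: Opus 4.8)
The plan is to handle the three parts separately, in each case identifying a colouring $f\colon\kappa\times\omega\to 2$ with the sequence $\seq{A_\alpha}{\alpha<\kappa}$ of its columns, $A_\alpha\dfeq\soast{n<\omega}{f(\alpha,n)=1}$. The guiding observation is that a product $X\times Y$ with $X\in[\kappa]^\kappa$ and $Y\in[\omega]^\omega$ is monochromatic precisely when either $Y\subseteq A_\alpha$ for all $\alpha\in X$ (in colour $1$) or $Y\cap A_\alpha=\emptyset$ for all $\alpha\in X$ (in colour $0$); so for the positive relations I would produce one infinite $Y$ that is contained in, or disjoint from, $\kappa$ many of the $A_\alpha$, while for the negative one I would build the $A_\alpha$ so that no infinite $Y$ has either property for $\kappa$ many indices.

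For (1), since $\card{\soast{A_\alpha}{\alpha<\kappa}}\leqslant\kappa<\mathfrak s$ (and the finitely many finite columns, if any, split nothing) this family is not splitting, so I fix $Y\in[\omega]^\omega$ that none of the $A_\alpha$ splits. For each $\alpha$ exactly one of $Y\cap A_\alpha$, $Y\setminus A_\alpha$ is finite, partitioning $\kappa$ into two pieces, one of which --- say $X_0\dfeq\soast{\alpha<\kappa}{Y\setminus A_\alpha\text{ finite}}$ --- has size $\kappa$. Using $\cfi(\kappa)>\omega$ I pigeonhole the values $\max(Y\setminus A_\alpha)$ for $\alpha\in X_0$ among the $\omega$ possibilities, obtaining $X\in[X_0]^\kappa$ on which this maximum is bounded by some $m<\omega$; then $Y\setminus(m+1)$ is infinite and contained in every $A_\alpha$ with $\alpha\in X$, giving a monochromatic product in colour $1$ (the other piece would give colour $0$).

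For (2), I fix a family $\seq{R_\xi}{\xi<\mathfrak r}$ witnessing $\mathfrak r$. For each $\alpha<\kappa$ there are $\xi(\alpha)<\mathfrak r$ and $j(\alpha)<2$ with $R_{\xi(\alpha)}\subseteq^*A_\alpha$ if $j(\alpha)=1$ and $R_{\xi(\alpha)}\subseteq^*\omega\setminus A_\alpha$ if $j(\alpha)=0$; this is the defining property of the family when $A_\alpha$ is infinite and co-infinite, and is trivial otherwise. The assignment $\alpha\mapsto(\xi(\alpha),j(\alpha))$ has range of size at most $\mathfrak r<\cfi(\kappa)$, so some fibre has size $\kappa$; fixing the corresponding $R\dfeq R_{\xi^*}$ and value $j^*$, say $j^*=1$, every $\alpha$ in that fibre has $R\setminus A_\alpha$ finite, and I finish exactly as in (1), pigeonholing $\max(R\setminus A_\alpha)$ (using that $\cfi(\kappa)>\mathfrak r$ is uncountable) and thinning $R$. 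The assumption $\kappa\leqslant\mathfrak c$ is not actually needed for this direction and only marks off the interesting range: once $\cfi(\kappa)>2^\omega$ the relation already holds in $\ZFC$, because some column then recurs on a set of size $\kappa$.

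For (3), using $\cfi(\mathfrak c)\leqslant\kappa\leqslant\mathfrak c=\mathfrak r$ I write $[\omega]^\omega=\bigcup_{\alpha<\kappa}\mathcal Z_\alpha$ as a $\subseteq$-increasing union with $\card{\mathcal Z_\alpha}<\mathfrak r$ for every $\alpha$, and choose $A_\alpha\in[\omega]^\omega$ recursively: at stage $\alpha$, since $\card{\mathcal Z_\alpha}<\mathfrak r$ the family $\mathcal Z_\alpha$ fails to witness $\mathfrak r$, so there is a single $A_\alpha$ splitting every member of $\mathcal Z_\alpha$. Then for any $Y\in[\omega]^\omega$, choosing $\alpha^*$ with $Y\in\mathcal Z_{\alpha^*}$, every $A_\alpha$ with $\alpha\geqslant\alpha^*$ splits $Y$, whence $\soast{\alpha<\kappa}{Y\subseteq A_\alpha}$ and $\soast{\alpha<\kappa}{A_\alpha\cap Y=\emptyset}$ are both contained in $\alpha^*$ and so have size $<\kappa$; the colouring with columns $A_\alpha$ therefore witnesses $\binom\kappa\omega\doesntarrow\binom\kappa\omega^{1,1}_2$. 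I expect the points needing the most care to be, in the positive parts, organising the two rounds of pigeonholing so that the cofinality hypothesis on $\kappa$ is spent once on the coarse partition (by colour, respectively by member of the witnessing family) and once on the fine partition absorbing the finitely many exceptional values for each relevant column; and in the negative part, noticing that $\cfi(\mathfrak c)\leqslant\kappa$ is exactly what allows $[\omega]^\omega$ to be stratified into $\kappa$ pieces each small enough for $\mathfrak r=\mathfrak c$ to furnish a common splitter at every stage.
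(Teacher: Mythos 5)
The paper states this result as a Fact quoted from Garti and Shelah \cite{014GS0} and contains no proof of it, so there is nothing internal to compare against. Your argument is correct and is essentially the standard one from the cited source: identifying the colouring with its columns, using a non-split set (resp.\ a reaped set, resp.\ a recursively chosen sequence of splitters against a $\cfi(\mathfrak{c})$-stratification of $[\omega]^\omega$) and spending the cofinality hypothesis on the two rounds of pigeonholing exactly as you describe.
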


In Theorem \ref{negpolarized} we are able to prove that $\mathfrak{d}=\aleph_1$ implies $\binom{\omega_1}{\omega} \doesntarrow \binom{\omega_1}{\omega}^{1,1}_2$ (and in fact something a bit stronger, increasing the number of col\ou rs and obtaining a negative square-bracket relation). As corollaries, we answer several questions from \cite{016GS0}.

In the last section, we relate Luzin sets to partition relations, showing that the existence of a Luzin set suffices for an example of Shelah of a function witnessing $\omega_1\doesntarrow[\omega_1]^2_\omega$ with no triangle having three different col\ou rs.

\section{Partition Relations and the Dominating Number}

We recall the following definition.

\begin{definition}
Let $\alpha$ and $\beta$ both be ordinals. A sequence of functions $\seq{f_\xi}{\xi < \rho}$ in $\soafft{\alpha}{\beta}$ is called a \emph{scale} if the order of eventual domination (\ie\ $f_\gamma < f_\delta$ if there is a $\zeta < \alpha$ such that for all $\xi \in \rho \setminus \zeta$ we have $f_\gamma(\xi) < f_\delta(\xi)$) is a well-order. It is called \emph{unbounded} if for all $f \in \soafft{\alpha}{\beta}$ there is a $\xi < \rho$ such that $f < f_\xi$.
\end{definition}

We will be mostly interested in the case where $\alpha$ is a cardinal and $\alpha = \beta$. Note that what we call a scale is called a \emph{strict scale} in \cite{979C0}. One can inductively define a scale of length $\mathfrak{b}$. If $\mathfrak{b} = \mathfrak{d}$ then one can define an unbounded scale of length $\mathfrak{b}$.

\begin{theorem}
\label{theorem : scale}
Suppose that $\mathfrak{b} = \mathfrak{d}$. Then
\begin{align*}
\binom{\mathfrak{d}}{\omega} \doesntarrow \squarebinom{\mathfrak{b}}{\omega}_{\aleph_0}^{1, 1}
\end{align*}
\end{theorem}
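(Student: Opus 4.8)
The plan is to construct a colouring $f : \mathfrak{d} \times \omega \to \omega$ directly from an unbounded scale, using the hypothesis $\mathfrak{b} = \mathfrak{d}$ to get such a scale of length exactly $\mathfrak{d} = \mathfrak{b}$. Fix an unbounded scale $\seq{f_\xi}{\xi < \mathfrak{d}}$ in $\soafft{\omega}{\omega}$ (strictly increasing in eventual domination, cofinal). The natural idea is to let the colour $f(\xi, n)$ record something about the value $f_\xi(n)$, so that large homogeneous rectangles $X \times Y$ with $X \in [\mathfrak{d}]^{\mathfrak{b}}$ and $Y \in [\omega]^\omega$ are forced to reveal a bounding function for the scale, contradicting unboundedness. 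The key point is that a set $X$ of order-type $\mathfrak{b}$ inside $\mathfrak{d} = \mathfrak{b}$ is cofinal in $\mathfrak{d}$, hence $\seq{f_\xi}{\xi \in X}$ is still cofinal in the eventual-domination order; and a set $Y \in [\omega]^\omega$ is infinite, so conditions imposed "on $Y$" are conditions imposed on infinitely many coordinates.

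The concrete colouring I would try: for $\xi < \mathfrak{d}$ and $n < \omega$, set $f(\xi, n) \dfeq f_\xi(n)$ if $f_\xi(n) < n$ and $f(\xi, n) \dfeq 0$ otherwise — or, more robustly, use a colouring that on the pair $(\xi, n)$ compares $f_\xi(n)$ with the $n$-th element of a would-be homogeneous set. Since we are aiming at the \emph{square-bracket} relation $\squarebinom{\mathfrak{b}}{\omega}_{\aleph_0}^{1,1}$, we only need: for every $X \in [\mathfrak{d}]^{\mathfrak{b}}$ and every $Y \in [\omega]^\omega$, the restriction $f \upharpoonright (X \times Y)$ takes all $\aleph_0$ colours (i.e. is onto $\omega$). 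Given such $X$ and $Y$, enumerate $Y = \seq{n_k}{k < \omega}$ increasingly. To hit colour $j$, I want to find $\xi \in X$ and $k$ with $f(\xi, n_k) = j$. Define $g \in \soafft{\omega}{\omega}$ by $g(n) = j + n$ (or any function growing fast enough to dominate the "trivial-colour" threshold on the coordinates in $Y$); by unboundedness of the scale there is $\xi < \mathfrak{d}$ with $g < f_\xi$, and since $X$ is cofinal in $\mathfrak{d}$ we may take $\xi \in X$ with $g <^* f_\xi$. Then on all large enough coordinates $f_\xi(n) > g(n)$, and a careful choice of the colouring rule — one that reads off a fixed colour $j$ precisely when $f_\xi(n)$ lands in a prescribed interval determined by $n$ — lets me locate some $n_k \in Y$ with $f(\xi, n_k) = j$. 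Iterating over all $j < \omega$ shows $f$ omits no colour on $X \times Y$.

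Making this last step clean is the main obstacle: I need the colouring to be designed so that "$f_\xi$ eventually dominates a chosen $g$" translates into "$f$ takes a specified value at some coordinate of any given infinite $Y$". The trick is to build the colouring so that the colour at $(\xi, n)$ partitions the possible values of $f_\xi(n)$ into infinitely many classes in a way that is \emph{interleaved with $n$} — for instance, colour $(\xi,n)$ by the unique $j \le n$ with $f_\xi(n) \in [ (\text{block } j \text{ of size depending on } n) ]$, and colour $0$ when $f_\xi(n)$ is too small. Then given $j$ and infinite $Y$, choosing $\xi \in X$ with $f_\xi$ growing fast enough guarantees that for infinitely many $n \in Y$ the value $f_\xi(n)$ is large enough that colour $j$ is "available" at coordinate $n$, and by a counting/pigeonhole argument along $Y$ some such coordinate actually receives colour $j$ — or, simpler, one modifies $f_\xi$ at finitely many coordinates (which does not change its scale class, only its position below $\mathfrak{b}$ many functions) to force the value at a single chosen $n_k \in Y$, using that scale classes are closed under finite modifications while $X$ being cofinal of order-type $\mathfrak{b}$ still lets us pick a representative in $X$. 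I expect the verification that $X \in [\mathfrak{d}]^{\mathfrak{b}}$ is cofinal (immediate, since $\mathfrak{b} = \mathfrak{d}$ is a cardinal and $\otp(X) = \mathfrak{b}$ forces $\sup X = \mathfrak{d}$) and that the scale restricted to $X$ remains unbounded to be the routine part; designing the interval-coding of colours so that unboundedness delivers \emph{every} colour on \emph{every} infinite $Y$ is where the real care is needed.
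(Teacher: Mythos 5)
Your high-level strategy matches the paper's: take an unbounded scale of length $\mathfrak{b}=\mathfrak{d}$, build the colouring from the scale functions composed with a map $s:\omega\to\omega$ all of whose fibres are infinite, and use eventual domination to show every colour appears on every rectangle $X\times Y$. But the proposal stops exactly where the proof has to happen, and the two fallback mechanisms you offer for the crucial step both fail. First, domination only gives a \emph{lower bound} on $f_\xi(n)$; it gives no control over which residue class, interval, or ``block'' the value $f_\xi(n)$ lands in, so no pigeonhole argument along $Y$ will force $f_\xi(n)$ into a prescribed set of values coding colour $j$. Second, you cannot ``modify $f_\xi$ at finitely many coordinates to force the value at a chosen $n_k\in Y$'': the scale, and hence the colouring, is fixed before $X$ and $Y$ are given; changing $f_\xi$ changes the colouring rather than producing a witness inside the given $X$. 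You must find an actual $\xi\in X$ whose unmodified $f_\xi$ realises colour $j$ somewhere on $\{\xi\}\times Y$.

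The missing idea is to turn the construction around: rather than decoding the value $f_\xi(n)$ at coordinates $n\in Y$, iterate $f_\alpha$ to produce an increasing sequence $g_\alpha(0)=0$, $g_\alpha(n+1)=f_\alpha(g_\alpha(n))$, and colour $\opair{\alpha}{k}$ by $s$ of the index of the block $[g_\alpha(n-1),g_\alpha(n)]$ containing $k$. Then the single function to dominate is the ``gap function'' of $Y$, namely $h(n)=\min(Y\setminus(n+1))$: if $f_\alpha$ eventually dominates $h$, then every sufficiently late block of $g_\alpha$ has length exceeding the gaps of $Y$ and therefore meets $Y$, so $\{\alpha\}\times Y$ receives the colour $s(n)$ for all large $n$ --- which is all of $\omega$ since each fibre of $s$ is infinite. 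This is where domination actually buys you something: it controls the block lengths (which you define), not the values $f_\xi(n)$ at points of $Y$ (which you cannot control). Without this reversal your interval-coding sketch does not close.
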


This answers \cite[Problems 3.6 \& 3.10]{016GS0} negatively.

\begin{proof}
Assume that $\mathfrak{b} = \mathfrak{d}$. Then we may construct an unbounded scale $\seq{f_\alpha}{\alpha < \mathfrak{b}}$ in $\soafft{\omega}{\omega}$ and assume \wlg\ that for every $\alpha < \mathfrak{b}$ the sequence $f_\alpha$ is properly increasing and $f_\alpha(0) \in \omega \setminus 1$. Furthermore associate to each countable ordinal $\alpha$ a function $g_\alpha : \omega \longrightarrow \omega$ defined inductively by $g_\alpha(0) \dfeq 0$ and $g_\alpha(n + 1) \dfeq f_\alpha\big(g_\alpha(n)\big)$. Note that for each countable ordinal $\alpha$, the function $g_\alpha$ is properly increasing. Fix a mapping $s : \omega \longrightarrow \omega$ such that for all natural numbers $n$ the set $\pwim{s^{-1}}{n}$ is infinite. Now we define a col\ou ring $\chi$ as follows:
\begin{align}
\chi : \mathfrak{b} \times \omega & \longrightarrow \omega,\\
\opair{\alpha}{k} & \longmapsto s\Big(\min\big(\soast{n < \omega}{k \leqslant g_\alpha(n)}\big)\Big) \notag
\end{align}
Suppose towards a contradiction that $X \in [\mathfrak{b}]^{\mathfrak{b}}$ and $Y \in [\omega]^\omega$ are such that $\pwim{\chi}{X \times Y} \ne \omega$. Consider the function
\begin{align}
h : \omega & \longrightarrow \omega,\\
n & \longmapsto \min(Y \setminus (n+1)).\notag
\end{align}
Note that for all natural numbers $n$ we have $h(n) > n$. Let $i \in \omega \setminus \pwim{\chi}{X \times Y}$. Choose an $\alpha \in X$ such that $f_\alpha$ properly dominates $h$ (here we need that $\seq{f_\alpha}{\alpha < \mathfrak{b}}$ is a scale). Let $j$ be a sufficiently large natural number such that for all $n \in \omega \setminus j$ we have $f_\alpha(n) > h(n)$. Choose a $k \in \pwim{s^{-1}}{i} \setminus (j + 1)$ and define $\ell \dfeq \max\big(Y \cap g_\alpha(k)\big)$. But then we have
\begin{align}
s\Big(\min\big(\soast{n < \omega}{\ell \leqslant g_\alpha(n)}\big)\Big) = \chi(\alpha, \ell) \ne i.
\end{align}
Now this implies that there is an $m < k$ with $\ell \leqslant g_\alpha(m)$ which in turn gives rise to the following:
\begin{align}
Y \setminus (\ell + 1) \ni h(\ell) \leqslant h\big(g_\alpha(m)\big) \leqslant h\big(g_\alpha(k - 1)\big) < f_\alpha\big(g_\alpha(k - 1)\big) = g_\alpha(k).
\end{align}
This contradicts the definition of $\ell$ thus proving the Theorem.
\end{proof}

Theorem \ref{theorem : scale} solves \cite[Problem 3.19]{016GS0}. The problem asks whether for a cardinal $\aleph_1<\kappa<\mathfrak{c}$ it is possible for $\binom{\kappa}{\omega} \longrightarrow \binom{\kappa}{\omega}^{1,1}_2$  to be destroyed by the L\'evy collapse of $\mathfrak{c}$ to $\kappa$. We give an affirmative answer by showing that it is consistent that $\aleph_1 < \kappa < \mathfrak{c}$ and $\binom{\kappa}{\omega} \longrightarrow \binom{\kappa}{\omega}^{1,1}_2$ but after L\'evy-collapsing $\mathfrak{c}$ to $\kappa$ one has $\binom{\kappa}{\omega} \doesntarrow \binom{\kappa}{\omega}^{1,1}_2$.

Fischer and Stepr\={a}ns proved in \cite{008FS0}, that $\aleph_1 < \mathfrak{b} < \mathfrak{s}$ is consistent. L\'evy-collapsing $\mathfrak{c}$ to $\mathfrak{b}$ over this model yields a model with a scale of length $\mathfrak{b}$ which by Theorem \ref{theorem : scale} implies $\binom{\mathfrak{b}}{\omega} \doesntarrow \binom{\mathfrak{b}}{\omega}$.

Note that this has been solved independently by Garti and Shelah in \cite{018GS0}.

An obvious question is whether or not the hypothesis in Theorem \ref{theorem : scale} is necessary. The answer to this question is negative.

\begin{theorem}
\label{nnegcohen} It is consistent that $\mathfrak{b}<\mathfrak{d}$ and $\left( \begin{smallmatrix} \mathfrak{d} \\ \omega \end{smallmatrix} \right) \nrightarrow \left[ \begin{smallmatrix} \omega_1 \\ \omega \end{smallmatrix} \right]^{1,1}_{\aleph_0}$.
This amounts also to $\left( \begin{smallmatrix} \mathfrak{d} \\ \omega \end{smallmatrix} \right) \nrightarrow \left[ \begin{smallmatrix} \mathfrak{b} \\ \omega \end{smallmatrix} \right]^{1,1}_{\aleph_0}$.
The distance between $\mathfrak{b}$ and $\mathfrak{d}$ can be arbitrarily large.
\end{theorem}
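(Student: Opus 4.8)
The plan is to realize this configuration after adding many Cohen reals. Work over a ground model $V$ of $\GCH$, fix any cardinal $\kappa > \aleph_1$ with $\kappa^{\aleph_0} = \kappa$, let $\mathbb{C}$ be the poset of finite partial functions from $\omega$ to $\omega$ ordered by reverse inclusion, and let $G$ be generic over $V$ for the finite-support product $\mathbb{P} = \prod_{\alpha < \kappa} \mathbb{C}$. Then $\mathbb{P}$ adds $\kappa$ Cohen reals, presented as a sequence $\seq{c_\alpha}{\alpha < \kappa}$ with $c_\alpha \in \soafft{\omega}{\omega}$ the generic arising from the $\alpha$-th coordinate. Since $\mathbb{P}$ is ccc of size $\kappa$, in $V[G]$ one has $\mathfrak{c} = \kappa$; and by the classical analysis of the Cohen model (\cf\ \cite{010B0}), $\mathfrak{b} \leqslant \non(\meagre) = \aleph_1$ and $\cov(\meagre) = \mathfrak{c}$, whence $\mathfrak{b} = \aleph_1$ and, since $\cov(\meagre) \leqslant \mathfrak{d} \leqslant \mathfrak{c}$, also $\mathfrak{d} = \mathfrak{c} = \kappa$. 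As $\kappa$ here is an arbitrary cardinal with $\kappa^{\aleph_0} = \kappa$, this already gives the last sentence of the theorem; and since $\mathfrak{b} = \aleph_1 = \omega_1$ in $V[G]$, the two displayed non-arrows coincide, so it suffices to prove the first.

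For that, consider in $V[G]$ the col\ou ring $\chi : \mathfrak{d} \times \omega \longrightarrow \omega$ given by $\chi(\alpha, k) \dfeq c_\alpha(k)$, and aim to show that $\chi[X \times Y] = \omega$ whenever $X \in [\kappa]^{\omega_1}$ and $Y \in [\omega]^{\omega}$, which is precisely $\binom{\mathfrak{d}}{\omega} \nrightarrow [\binom{\omega_1}{\omega}]^{1,1}_{\aleph_0}$. Suppose not; then, by the forcing theorem, fix $\mathbb{P}$-names $\name X$, $\name Y$, a col\ou r $i < \omega$, and a condition $p$ forcing that $\name X \in [\kappa]^{\omega_1}$, $\name Y \in [\omega]^{\omega}$, and $i \notin \chi[\name X \times \name Y]$, with $\name Y$ chosen to be a nice name, so that the set $L$ consisting of the (finite) support of $p$ together with all coordinates occurring in $\name Y$ is countable. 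Because $p$ forces $\name X$ to be uncountable, the set $S = \bigsoast{\alpha < \kappa}{\exists q \leqslant p \colon q \Vdash \alpha \in \name X}$ is uncountable (otherwise $p \Vdash \name X \subseteq \check S$ would make $\name X$ countable), so one may pick $\alpha \in S \setminus L$ together with some $q \leqslant p$ with $q \Vdash \alpha \in \name X$.

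It remains to perturb $q$ at coordinate $\alpha$. Factoring $\mathbb{P} \cong \mathbb{C}_\alpha \times \mathbb{P}^\alpha$ (the $\alpha$-th copy against the rest), write $q = \opair{t}{q_0}$ with $t = q(\alpha) \in \mathbb{C}$ a finite partial function. Since $\name Y$ mentions no coordinate $\alpha$, it is a $\mathbb{P}^\alpha$-name, so from $q$ forcing $\name Y$ infinite we get that $q_0 \Vdash_{\mathbb{P}^\alpha} \name Y$ is infinite; as $q_0$ cannot force $\name Y$ into the finite set $\dom(t)$, there are $k \in \omega \setminus \dom(t)$ and $q_0' \leqslant q_0$ in $\mathbb{P}^\alpha$ with $q_0' \Vdash k \in \name Y$. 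Put $q^* = \opair{t \cup \{\opair{k}{i}\}}{q_0'}$, which is a condition since $k \notin \dom(t)$; then $q^* \leqslant q \leqslant p$, and $q^*$ forces $\alpha \in \name X$, $k \in \name Y$, and $c_\alpha(k) = i$ (its $\alpha$-coordinate being $t \cup \{\opair{k}{i}\}$), hence $q^* \Vdash i \in \chi[\name X \times \name Y]$ --- contradicting that $p$ forces $i \notin \chi[\name X \times \name Y]$. Finally, letting $\kappa$ range over arbitrarily large cardinals with $\kappa^{\aleph_0} = \kappa$ makes the gap between $\mathfrak{b}$ and $\mathfrak{d}$ arbitrarily large, which finishes the proof.

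The delicate point --- and the step I expect to be the main obstacle --- is the coordinate bookkeeping of the last two paragraphs: one must secure an index $\alpha$ which some extension of $p$ forces into $\name X$ but which lies outside the countably many coordinates already committed by $p$ and by the name $\name Y$, and this is exactly where $\card{X} = \aleph_1 > \aleph_0$ is genuinely used; then, with $\alpha$ fixed, one must be able to extend below $p$ so as to throw a fresh member of $\name Y$ into the domain of $c_\alpha$ with value $i$ without disturbing that membership, which is where it matters that $\mathbb{P}$ is a product rather than an iteration and that names for subsets of $\omega$ can be taken to involve only countably many coordinates. Granting this, verifying $\mathfrak{b} = \aleph_1 < \mathfrak{d} = \mathfrak{c}$ in $V[G]$ is entirely standard.
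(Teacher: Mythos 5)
Your proposal is correct and follows essentially the same route as the paper: add many Cohen reals over a model of $\CH$/$\GCH$, colour $(\alpha,k)$ by the value of the $\alpha$-th Cohen real at $k$, and given a condition purportedly omitting a colour on $\name{X}\times\name{Y}$, use the fact that $\name{Y}$ involves only countably many coordinates while $\name{X}$ is forced uncountable to find a fresh coordinate $\alpha$ and a fresh $k$ at which the colour can be forced by hand. The only cosmetic differences are that you present the forcing as a product with nice names where the paper uses the (isomorphic) finite-support iteration and maximal antichains; your handling of the choice of $\alpha$ outside the committed coordinates is, if anything, slightly cleaner.
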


\begin{proof}
We assume the continuum hypothesis in the ground model, and we choose $\lambda>\aleph_1$ so that $\lambda^\omega=\lambda$. Let $\mathbb{Q}$ be the following forcing notion. We say that $p\in\mathbb{Q}$ iff $p$ is a partial function from $\omega$ into $\omega$ and $|\rm dom(p)|<\aleph_0$. If $p,q\in\mathbb{Q}$ then $p\leq q$ iff $p\subseteq q$.

Let $\mathbb{P}$ be the finite support iteration $\langle\mathbb{P}_\alpha, \name{\mathbb{Q}}_\beta: \alpha\leq\lambda, \beta<\lambda\rangle$, when $\name{\mathbb{Q}}_\beta$ is a $\mathbb{P}_\beta$-name of the forcing $\mathbb{Q}$ at every successor stage. It is known that $\mathbb{Q}$ is isomorphic to the usual Cohen forcing (see, e.g., Halbeisen, Combinatorial set theory) and hence $\mathbb{P}$ is isomorphic to the forcing which adds $\lambda$ many Cohen reals. In particular, $\mathbb{P}$ is $ccc$ so no cardinal is collapsed and no cofinality is changed.

Let $G\subseteq\mathbb{P}$ be generic. It follows that $V[G]\models \mathfrak{b}=\aleph_1<\lambda=\mathfrak{d}=\mathfrak{c}$. We shall prove that $V[G]\models \left( \begin{smallmatrix} \lambda \\ \omega \end{smallmatrix} \right) \nrightarrow \left[ \begin{smallmatrix} \omega_1 \\ \omega \end{smallmatrix} \right]^{1,1}_{\aleph_0}$. For this end, let $(f_\alpha:\alpha<\lambda)$ be an enumeration of the Cohen reals we added, so $f_\alpha\in{}^\omega \omega$ for every $\alpha<\lambda$.
We define a name $\name{c}$ of a coloring from $\lambda\times\omega$ into $\aleph_0$ as follows. Given $\alpha<\lambda$ and $n\in\omega$ we let $\name{c}(\alpha,n)=\name{f_\alpha}(n)$. We claim that $\name{c}$ exemplifies the negative relation to be proved.

For this, assume that $\name{A}\in[\lambda]^{\aleph_1}$ and $\name{B}\in[\omega]^\omega$. Assume towards contradiction that there exists a condition $p_0\in\mathbb{P}$ which forces that $\name{c}``(\name{A}\times\name{B})\neq \aleph_0$. We may extend $p_0$ into a condition $p$ such that $p\Vdash\check{m}\notin \name{c}``(\name{A}\times\name{B})$ for some $m\in\omega$. The idea of the proof is to find some $\alpha\in\lambda$ and $n\in\omega$ for which no value has been fixed yet, to force them into $\name{A}\times\name{B}$ and then to assign the value $m$ to $\name{c}(\alpha,n)$.

Firstly, for every $n\in\omega$ let $\mathcal{A}_n$ be a maximal anti-chain which decides the statement $\check{n}\in\name{B}$. The size of each $\mathcal{A}_n$ is at most $\aleph_0$, since $\mathbb{P}$ is $ccc$. Likewise, $|{\rm dom}(q)|<\aleph_0$ for every $q\in\mathcal{A}_n$, so the set $U=\bigcup\{{\rm dom}(q):q\in\mathcal{A}_n,n\in\omega\}$ is countable. Recall that $\name{A}\in[\lambda]^{\aleph_1}$, so $\Vdash_{\mathbb{P}} \name{A}\nsubseteq U$. Fix any ordinal $\alpha\in\lambda$ so that $p\nVdash \check{\alpha}\notin\name{A}$. Now we choose a condition $q\geq p$ such that $q\Vdash\check{\alpha}\in\name{A}$, and we may assume without loss of generality that $\alpha\in{\rm dom}(q)$.

Secondly, we need the apropriate $n\in\omega$. Choose $n_0\in\omega$ such that $\sup({\rm dom}(q(\alpha)))<n_0$. We choose some $n\in\omega$ and a condition $r\geq q\upharpoonright U$ such that ${\rm dom}(r)\subseteq U$ and $r\Vdash n_0<n\in\name{B}$. Let $s$ be $r\cup q\upharpoonright({\rm dom}(q) \setminus U)$ and let $t$ be $s\cup\langle\alpha,n,m\rangle$. It follows that $t\Vdash\name{c}(\alpha,n)=m$, a contradiction.
\end{proof}

Now we use Theorem \ref{theorem : scale}  to settle another question from \cite{018GS0}. Recall from Fact \ref{fact:polarized} that if $\mathfrak{r}<\kappa\leqslant\mathfrak{c}$ and $\cfi(\kappa)>\mathfrak{r}$, then $\binom{\kappa}{\omega} \longrightarrow \binom{\kappa}{\omega}$. Asking what happens when $\aleph_0<\cfi(\kappa)\leqslant\mathfrak{r}$ gives rise to the following problem, \cf\ \cite[Question 1.8(a)]{018GS0}:

\begin{question}
\label{q1} Is it consistent that $2^{\aleph_0}=\lambda>\mathfrak{r}, \cfi(\lambda)>\aleph_0$ and $\binom{\lambda}{\omega}\doesntarrow \binom{\lambda}{\omega}^{1,1}_2$?
\end{question}

We shall give a negative answer, which is a bit surprising.
It demonstrates the fact that the duality between $\mathfrak{r}$ and $\mathfrak{s}$ is not totally complete.
Recall that $\mathfrak{i}$, the independence number, is the minimal size of a maximal independent family in $[\omega]^\omega$.
It is well known that $\mathfrak{r},\mathfrak{d}\leqslant\mathfrak{i}$, see e.g. \cite{010B0}. It is also known that one can force $\mathfrak{i}=\aleph_1$ and $\lambda=2^{\aleph_0}$ provided that $\cfi(\lambda)>\omega$.

\begin{theorem}
\label{a1}
  One can force $\mathfrak{r}<\lambda, \cfi(\lambda)>\omega$ and yet $\binom{\lambda}{\omega}\doesntarrow\binom{\lambda}{\omega}^{1,1}_2$.
\end{theorem}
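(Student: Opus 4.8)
The plan is to derive Theorem \ref{a1} from Theorem \ref{theorem : scale} by a routine ``stacking'' of colourings, fed by the consistency result recalled just before Question \ref{q1}.

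The soft ingredient I would isolate first is the following: \emph{if $\lambda$ is a cardinal, $\mu \dfeq \cfi(\lambda)$, and $\binom{\mu}{\omega} \doesntarrow \binom{\mu}{\omega}^{1,1}_2$, then $\binom{\lambda}{\omega} \doesntarrow \binom{\lambda}{\omega}^{1,1}_2$.} To prove it, fix a witnessing $\chi \colon \mu \times \omega \to 2$, partition $\lambda = \bigcup_{\zeta < \mu} P_\zeta$ into $\mu$ pairwise disjoint blocks each of cardinality below $\lambda$ (read off a continuous increasing sequence cofinal in $\lambda$), let $e \colon \lambda \to \mu$ send each $\beta$ to the index of its block, and put $\psi(\beta,k) \dfeq \chi(e(\beta),k)$. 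For $X \in [\lambda]^\lambda$ the image $e[X]$ has cardinality $\mu$ --- otherwise $X$ would be covered by fewer than $\mu = \cfi(\lambda)$ blocks, each of cardinality below $\lambda$, forcing $\card{X} < \lambda$ --- so $e[X] \in [\mu]^\mu$; hence for every $Y \in [\omega]^\omega$ we have $\psi[X \times Y] = \chi\bigl[e[X] \times Y\bigr]$, which by the choice of $\chi$ is not a singleton. Thus $\psi$ witnesses the relation at $\lambda$. (Carrying the right-hand side unchanged through the same computation in fact upgrades the conclusion to $\binom{\lambda}{\omega} \doesntarrow \squarebinom{\lambda}{\omega}_{\aleph_0}^{1,1}$ whenever $\binom{\mu}{\omega} \doesntarrow \squarebinom{\mu}{\omega}_{\aleph_0}^{1,1}$.)

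For the theorem itself, I would invoke the cited fact that one can force $\mathfrak{i} = \aleph_1$ together with $2^{\aleph_0} = \lambda$ for any prescribed $\lambda$ with $\cfi(\lambda) > \omega$, and choose $\lambda$ to be singular with $\cfi(\lambda) = \aleph_1$; in particular $\lambda > \aleph_1$. In the resulting model $\mathfrak{r} \leqslant \mathfrak{i} = \aleph_1 < \lambda$ and $\cfi(\lambda) = \aleph_1 > \omega$, so the first two requirements of the theorem hold; and since $\aleph_1 \leqslant \mathfrak{b} \leqslant \mathfrak{d} \leqslant \mathfrak{i} = \aleph_1$, we have $\mathfrak{b} = \mathfrak{d} = \aleph_1$. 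Now Theorem \ref{theorem : scale} applies and yields $\binom{\aleph_1}{\omega} \doesntarrow \squarebinom{\aleph_1}{\omega}_{\aleph_0}^{1,1}$; composing a witnessing colouring with a surjection $\omega \to 2$ gives $\binom{\aleph_1}{\omega} \doesntarrow \binom{\aleph_1}{\omega}^{1,1}_2$. Applying the stacking ingredient with $\mu = \aleph_1 = \cfi(\lambda)$ then delivers $\binom{\lambda}{\omega} \doesntarrow \binom{\lambda}{\omega}^{1,1}_2$, which completes the proof.

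The one point that genuinely requires thought is the choice of $\lambda$ with $\cfi(\lambda) = \aleph_1$ rather than $\lambda$ regular: the forcing supplies $\mathfrak{d} = \aleph_1$ for free alongside $\mathfrak{i} = \aleph_1$, Theorem \ref{theorem : scale} turns $\mathfrak{b} = \mathfrak{d}$ into a negative polarized relation only at the single cardinal $\aleph_1$, and the stacking step lifts it to $\lambda$ exactly because $\aleph_1 = \cfi(\lambda)$ --- there is no comparable input available for a regular $\lambda$ above $\aleph_1$. This is also why the answer to Question \ref{q1} turns out negative in spite of the apparent duality with the splitting number: relaxing the hypothesis ``$\cfi(\lambda) > \mathfrak{r}$'' of Fact \ref{fact:polarized}(2) to ``$\cfi(\lambda) > \omega$'' genuinely costs the positive relation.
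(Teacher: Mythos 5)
Your proposal is correct and follows essentially the same route as the paper: the identical reduction lemma (composing a witnessing colouring on $\cfi(\lambda)$ with the block/interval projection $\lambda\to\cfi(\lambda)$, using that a full-size $X\subseteq\lambda$ must project onto a full-size subset of $\cfi(\lambda)$), combined with the cited forcing of $\mathfrak{i}=\aleph_1$ with $2^{\aleph_0}=\lambda$ singular of cofinality $\aleph_1$ and the chain $\mathfrak{r},\mathfrak{d}\leqslant\mathfrak{i}$ feeding Theorem \ref{theorem : scale}. The paper fixes $\lambda=\aleph_{\omega_1}$ where you leave $\lambda$ arbitrary with $\cfi(\lambda)=\aleph_1$, but this is an inessential difference.
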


\begin{proof}
Let us prove the following general assertion.
\begin{claim*}
If $\displaystyle\binom{\kappa}{\omega}\doesntarrow\binom{\kappa}{\omega}^{1,1}_2$ and $\lambda>\cfi(\lambda)=\kappa$ then $\displaystyle\binom{\lambda}{\omega}\doesntarrow\binom{\lambda}{\omega}^{1,1}_2$.
\end{claim*}

\begin{proofofclaim}
For this, choose a continuous increasing sequence of cardinals $\langle\vartheta_\varepsilon:\varepsilon<\kappa\rangle$ such that $\lambda=\bigcup_{\varepsilon<\kappa}\vartheta_\varepsilon$ and $\vartheta_{\varepsilon+1}$ is a regular cardinal for every $\varepsilon<\kappa$.
Define the intervals mapping $h:\lambda\rightarrow\kappa$ by $h(\alpha)=\min\{\varepsilon<\kappa:\vartheta_\varepsilon\leqslant\alpha<\vartheta_{\varepsilon+1}\}$.

Choose a col\ou ring $c:\kappa\times\omega\rightarrow 2$ which exemplifies the negative relation $\binom{\kappa}{\omega}\doesntarrow\binom{\kappa}{\omega}^{1,1}_2$. For every $\alpha<\lambda, n\in\omega$ let $d(\alpha,n)$ be $c(h(\alpha),n)$, so $d:\lambda\times\omega\rightarrow 2$. We claim that $d$ exemplifies the negative relation $\binom{\lambda}{\omega}\doesntarrow\binom{\lambda}{\omega}^{1,1}_2$.

Assume toward contradiction that $A\in[\lambda]^\lambda, B\in[\omega]^\omega$ and $d\upharpoonright(A\times B)$ is constant.
Let $H = \{h(\alpha):\alpha\in A\}$.
Since $A$ is unbounded in $\lambda$ and $\cfi(\lambda)=\kappa$ we see that $H\in[\kappa]^\kappa$.
We shall prove that $c\upharpoonright(H\times B)$ is constant, thus arriving at a contradiction.

Let $j$ be the constant value of $d$ over $A\times B$.
Choose any $\varepsilon\in H, n\in\omega$.
Let $\alpha$ be an element of $A$ such that $\varepsilon=h(\alpha)$.
From the definition of $c$ it follows that $c(\varepsilon,n)=c(h(\alpha),n)=d(\alpha,n)=j$, so the claim is proved.
\end{proofofclaim}

Now force $\mathfrak{i}=\aleph_1$ while $\lambda=2^{\aleph_0}=\aleph_{\omega_1}$.
As noted above, $\mathfrak{d}\leqslant\mathfrak{i}$ hence $\mathfrak{b}=\mathfrak{d}=\omega_1$ so $\binom{\omega_1}{\omega}\doesntarrow\binom{\omega_1}{\omega}^{1,1}_2$.
By the above statement, $\binom{\lambda}{\omega}\doesntarrow\binom{\lambda}{\omega}^{1,1}_2$ as well, so the proof is accomplished.
\end{proof}

Theorem \ref{theorem : scale} can be improved if $\mathfrak{d}=\aleph_1$.

\begin{theorem}
\label{negpolarized}
Suppose that $\mathfrak{d} = \aleph_1$. Then
\begin{align*}
\binom{\omega_1}{\omega_1} \doesntarrow \sqtarg{\omega_1}{\omega}{\omega}{\omega_1}^{1,1}_{\aleph_0}.
\end{align*}
\end{theorem}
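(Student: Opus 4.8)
The plan is to construct a single \emph{symmetric} colouring $f\colon\omega_1\times\omega_1\to\omega$, i.e.\ with $f(\alpha,\beta)=f(\beta,\alpha)$, such that $f[X\times Y]=\omega$ whenever one of $X,Y$ is an uncountable subset of $\omega_1$ and the other has order type $\omega$. Since a subset of $\omega_1$ has order type $\omega_1$ precisely when it is uncountable, and since $f$ is symmetric, defeating both alternatives of the square-bracket target amounts to one requirement: for every uncountable $X\subseteq\omega_1$ and every $Y\subseteq\omega_1$ of order type $\omega$, $f[X\times Y]=\omega$.

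The colouring marries the construction of Theorem~\ref{theorem : scale} to a coherent sequence. As $\mathfrak b=\mathfrak d=\aleph_1$, fix an unbounded scale $\seq{f_\xi}{\xi<\omega_1}$ in $\soafft{\omega}{\omega}$ of properly increasing functions with $f_\xi(0)\geq 1$; then any uncountable, hence cofinal, set of indices yields a dominating subfamily. As in Theorem~\ref{theorem : scale}, set $g_\xi(0)=0$, $g_\xi(n+1)=f_\xi(g_\xi(n))$, fix $s\colon\omega\to\omega$ with every fibre infinite, and put $\chi(\xi,k)=s\big(\min\{n<\omega\mid k\leq g_\xi(n)\}\big)$. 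Read for a single index, the proof of Theorem~\ref{theorem : scale} gives: if $Z\subseteq\omega$ is infinite and $f_\xi$ eventually strictly dominates $n\mapsto\min(Z\setminus(n+1))$, then $\{\chi(\xi,k)\mid k\in Z\}=\omega$. Next, fix a coherent sequence $\seq{e_\gamma}{\gamma<\omega_1}$ of finite-to-one maps $e_\gamma\colon\gamma\to\omega$ with $e_\gamma\upharpoonright\delta=^\ast e_\delta$ for all $\delta<\gamma<\omega_1$; such sequences exist in $\ZFC$. Finally, set $f(\alpha,\alpha)=0$ and, for $\alpha\neq\beta$, $f(\alpha,\beta)=\chi\big(\max(\alpha,\beta),\,e_{\max(\alpha,\beta)}(\min(\alpha,\beta))\big)$.

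To verify the requirement, fix $X$ uncountable and $Y$ of order type $\omega$, and let $\delta^\ast=\sup Y$, a countable limit ordinal with $Y\subseteq\delta^\ast$. Discarding from $X$ its countably many members $\leq\delta^\ast$ keeps $X$ uncountable and forces $\alpha>\delta^\ast>\beta$ for all $\alpha\in X$, $\beta\in Y$, so that $f(\alpha,\beta)=\chi(\alpha,e_\alpha(\beta))$ throughout $X\times Y$ (in particular every such $\alpha$ is infinite). Now the decisive step. Put $Z_0=e_{\delta^\ast}[Y]\subseteq\omega$, which is infinite because $e_{\delta^\ast}$ is finite-to-one, and, using that $\{f_\alpha\mid\alpha\in X\}$ is dominating, choose $\alpha\in X$ with $f_\alpha$ eventually strictly dominating $n\mapsto\min(Z_0\setminus(n+1))$. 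By coherence $e_\alpha\upharpoonright\delta^\ast=^\ast e_{\delta^\ast}$, so $e_\alpha[Y]$ and $Z_0$ differ in only finitely many members; since eventual domination depends only on tails, $f_\alpha$ also eventually strictly dominates $n\mapsto\min(e_\alpha[Y]\setminus(n+1))$. By the single-index statement above, $\{\chi(\alpha,k)\mid k\in e_\alpha[Y]\}=\omega$, that is $\{f(\alpha,\beta)\mid\beta\in Y\}=\omega$, whence $f[X\times Y]=\omega$. By symmetry the other alternative is defeated as well, which finishes the proof.

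The step carrying the real weight, and the one I expect to be the main obstacle, is this decisive step: the colouring cannot see the test sets, so the order-type-$\omega$ set $Y$ must be carried into $\omega$ by a scheme that — once restricted to $Y$ — is essentially independent of which large ordinal $\alpha$ serves as the scale index, and coherence of $\seq{e_\gamma}{\gamma<\omega_1}$ is exactly what provides this ``independent up to a finite set'', while the tail-only nature of eventual domination is exactly why mod-finite agreement suffices; dovetailing these two facts is the heart of the matter. The reduction to $\alpha>\beta$ (which uses only that an order-type-$\omega$ subset of $\omega_1$ is bounded by a countable ordinal) is elementary but genuinely needed, and one should also check in passing that a single $\alpha$, chosen once, works uniformly for every colour in the reused Theorem~\ref{theorem : scale} computation, but that is routine.
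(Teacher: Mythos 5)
Your proof is correct and follows essentially the same strategy as the paper's: compose the scale-based colouring of $\omega_1\times\omega$ from Theorem \ref{theorem : scale} with a system of finite-to-one maps $e_\gamma\colon\gamma\to\omega$ that push a countable subset of a large ordinal down into $\omega$, then use eventual domination to catch every colour. The one substantive difference is the auxiliary sequence: the paper uses Lemma \ref{aronszajn} (injective $g_\alpha\colon\alpha\to\omega$ such that only countably many restrictions to any fixed $\delta$ occur) and must therefore diagonalize over an enumeration $\seq{h_i}{i<\omega}$ of these restrictions via the function $k\mapsto\max_{i<k}(\cdot)$, whereas you use a coherent sequence ($e_\gamma\upharpoonright\delta=^{*}e_\delta$), so that $e_\alpha[Y]$ is determined mod finite independently of $\alpha$ and a single function $n\mapsto\min(e_{\delta^{*}}[Y]\setminus(n+1))$ suffices. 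Both auxiliary objects are $\ZFC$ facts of the same provenance (the Aronszajn-tree construction), so neither version is cheaper in hypotheses; your coherence trick buys a visibly cleaner verification, while the paper's weaker countability requirement tolerates a larger class of sequences. The delicate points you flag all check out: mod-finite equality of $e_\alpha[Y]$ and $e_{\delta^{*}}[Y]$ does transfer eventual strict domination of the associated ``next element'' functions, the single index $\alpha$ chosen in the Theorem \ref{theorem : scale} computation does work uniformly for all colours (the threshold $j$ there depends only on $\alpha$ and the set, not on the colour), and discarding the members of $X$ below $\sup Y$ is harmless since one only needs a subset of $X$ to realize all colours.
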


\begin{proof}
We use the following lemma without proof which comes from the construction of an Aronszajn tree.
\begin{lemma}\label{aronszajn}
There exists a sequence of functions $\langle g_\alpha:\alpha<\omega_1\rangle$ so that for all countable ordinals $\alpha$:
\begin{enumerate}
\item $g_\alpha:\alpha\rightarrow\omega$ is injective.
\item \label{condition : 2nd Aronszajn} For any countable ordinal $\delta$, the set $\soast{g_\alpha\upharpoonright\delta}{\alpha<\omega_1}$ is countable.
\end{enumerate}
\end{lemma}
Fix a sequence of functions $\langle g_\alpha:\alpha<\omega_1\rangle$ as in Lemma \ref{aronszajn}.

Let $s : \omega \longrightarrow \omega$ be such that for all natural numbers $i$ the set $\pwim{s^{-1}}{\{i\}}$ is infinite. Moreover, let $\seq{f_\alpha}{\alpha < \omega_1}$ be a scale. We may assume \wlg\ that $f_\alpha(n) > n$ for all countable ordinals $\alpha$ and all natural numbers $n$. Define a sequence $\seq{k_\alpha}{\alpha < \omega_1}$ by setting ($k_{\omega\alpha + i + 1} \dfeq f_\alpha(k_{\omega\alpha + i})$ for all natural numbers $i$ and $k_{\omega\alpha} \dfeq 0$) for all countable ordinals $\alpha$. Note that for every countable ordinal $\alpha$ the sequence $\seq{k_{\omega\alpha + i}}{i < \omega}$ is strictly increasing. Now we define a col\ou ring as follows:
\begin{align}
\chi : \omega_1 \times \omega_1 & \longrightarrow \omega, \\
\opair{\alpha}{\beta} & \longmapsto \notag
\begin{cases}
 s\big(\min\soast{i < \omega}{g_\alpha(\beta) < k_{\omega\alpha + i}}\big) & \text{if } \beta < \alpha, \\
 \chi(\beta, \alpha) & \text{if } \alpha < \beta, \\
 0 & \text{else.}
\end{cases}
\end{align}
Now arbitrarily choose sets $X \in [\omega_1]^{\omega_1}$ and $Y \in [\omega_1]^\omega$ as well as a col\ou r $n$. Let $\gamma \dfeq \sup Y$. By condition \eqref{condition : 2nd Aronszajn} of the statement of Lemma \ref{aronszajn}, the set $H\dfeq\soast{g_\delta \upharpoonright \gamma}{\delta < \omega_1}$ is countable. Let $\seq{h_i}{i < \omega}$ be an enumeration of $H$. We define a function
\begin{align}
h : \omega & \longrightarrow \omega \\
k & \longmapsto \max_{i < k}\big(\min\soast{j \in \omega \setminus k}{h_i^{-1}(j) \in Y}\big) \notag
\end{align}
Let $\alpha \in X \setminus (\gamma + 1)$ be sufficiently large such that $f_\alpha$ eventually properly dominates $h$. Let $\ell$ be such that $h_\ell = g_\alpha \upharpoonright \gamma$. Let $\bar{\ell} \in \omega \setminus (\ell + 2)$ be such that $f_\alpha(i) > h(i)$ for all $i \in \omega \setminus \bar{\ell}$ and choose an $m \in \omega \setminus \bar{\ell}$ such that $s(m) = n$. We have
\begin{align}
k_{\omega\alpha + m} = f_\alpha(k_{\omega\alpha + m - 1}) > h(k_{\omega\alpha + m - 1}) \geqslant \min\soast{j \in \omega \setminus k_{\omega\alpha + m - 1}}{h_\ell^{-1}(j) \in Y}.
\end{align}
In other words, there is an ordinal $\beta \in Y$ such that $h_\ell(\beta) = g_\alpha(\beta) \in k_{\omega\alpha + m} \setminus k_{\omega\alpha + m - 1}$. But then, clearly, $\chi(\alpha, \beta) = s(m) = n$.
\end{proof}

With a creature forcing construction, Shelah proved that it is consistent that $\aleph_1=\mathfrak{b}<\mathfrak{s}$. Therefore, the hypothesis of Theorem \ref{negpolarized} cannot be weakened to $\mathfrak{b}=\aleph_1$, i.e., it is consistent that $\mathfrak{b}=\aleph_1$ and $\binom{\omega_1}{\omega}\rightarrow \binom{\omega_1}{\omega}^{1, 1}_2$.

We answer Jean Larson's question from \cite[page 112]{998L0} affirmatively.
\begin{theorem}
\label{theorem : quadruples and scales}
Suppose that $\kappa$ is regular and $\lambda = \kappa^+ = \mathfrak{d}_\kappa$. Then $\lambda^2 \doesntarrow (\lambda\kappa, 4)^2$.
\end{theorem}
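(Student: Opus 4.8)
The plan is to mimic the construction of Baumgartner and Hajnal \cite{987BH0} proving $\lambda^2 \doesntarrow (\lambda\kappa,4)^2$ from $2^\kappa=\kappa^+$, and to replace the one place where $2^\kappa=\kappa^+$ is used---an enumeration of $\soafft{\kappa}{\kappa}$ in order type $\kappa^+$---by an unbounded scale. First I would note that $\mathfrak{b}_\kappa=\kappa^+$: since $\kappa$ is regular, every $\seq{g_i}{i<\kappa}$ in $\soafft{\kappa}{\kappa}$ is dominated on a tail by $\alpha\mapsto\sup_{i<\alpha}\bigl(g_i(\alpha)+1\bigr)$, so no $\kappa$-sized family is unbounded and $\mathfrak{b}_\kappa>\kappa$, whereas $\mathfrak{b}_\kappa\leqslant\mathfrak{d}_\kappa=\kappa^+$ by the $\ZFC$-inequalities recalled in the introduction. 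By the $\kappa$-version of the remark following the definition of a scale we then obtain an \emph{unbounded} scale $\seq{f_\alpha}{\alpha<\kappa^+}$ in $\soafft{\kappa}{\kappa}$; normalising, we may assume each $f_\alpha$ is strictly increasing, the sequence is strictly increasing in the eventual-domination order $<^\ast$, and it is $<^\ast$-cofinal in $\soafft{\kappa}{\kappa}$.

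The next step is a purely ordinal-arithmetic reduction. Partition $\lambda^2$ into the consecutive intervals $B_\xi\dfeq\roin{\lambda\xi}{\lambda(\xi+1)}$ for $\xi<\lambda$, each of order type $\lambda$, and fix order isomorphisms $B_\xi\cong\kappa^+$, writing a point of $B_\xi$ as $\opair{\xi}{\eta}$ with $\eta<\kappa^+$. Suppose $X\subseteq\lambda^2$ has $\otp(X)=\lambda\kappa$; put $\nu_\xi\dfeq\otp(X\cap B_\xi)$ and $T\dfeq\soast{\xi<\lambda}{\nu_\xi=\lambda}$. Since $\lambda$ is a regular cardinal, any sum of at most $\lambda$ ordinals each $<\lambda$ is $\leqslant\lambda$; interleaving the $\otp(T)$ ``full'' blocks with these small contributions yields $\lambda\cdot\otp(T)\leqslant\otp(X)\leqslant\lambda\cdot\bigl(2\cdot\otp(T)+1\bigr)$, forcing $\otp(T)=\kappa$. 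Thus any $0$-homogeneous set of order type $\lambda\kappa$ gives a \emph{$\kappa$-block configuration}: an increasing sequence $\seq{\xi_i}{i<\kappa}$ of block indices and sets $J_i\dfeq\soast{\eta<\kappa^+}{\opair{\xi_i}{\eta}\in X}$, each of order type $\kappa^+$, whose associated point set is $0$-homogeneous. Two features will be used: because each $J_i$ has order type $\kappa^+$ it is cofinal in $\kappa^+$, so (the scale being $<^\ast$-increasing and $<^\ast$-cofinal) $\soast{f_\eta}{\eta\in J_i}$ is $<^\ast$-cofinal in $\soafft{\kappa}{\kappa}$; and the $\kappa$-sized set $\soast{f_{\xi_i}}{i<\kappa}$ of ``block functions'' has a $<^\ast$-upper bound, which we may take to be some $f_\gamma$, since $\mathfrak{b}_\kappa=\kappa^+$.

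It therefore suffices to produce a colouring $\chi\colon[\lambda^2]^2\to 2$, with colour $1$ playing the role of edges, that is $K_4$-free and in which no $\kappa$-block configuration is $0$-homogeneous. Following \cite{987BH0} I would build $\chi$ from a \emph{within-block} part---inside each $B_\xi\cong\kappa^+$ a fixed triangle-free ``oscillation graph'' defined from $\seq{f_\alpha}{\alpha<\kappa^+}$, designed so that every independent set of order type $\kappa^+$ is ``rich'' (its functions oscillate wildly, in a precise sense, against prescribed targets)---and a \emph{between-block} part: for $\xi<\xi'$ the pair $\opair{\xi}{\eta},\opair{\xi'}{\eta'}$ is coloured according to a parity of the oscillation of $f_\eta$ against $f_{\eta'}$ read off from the coordinate $\delta(\xi,\xi')\dfeq\min\soast{k<\kappa}{(\forall j\geqslant k)\,f_\xi(j)<f_{\xi'}(j)}$ onward, the admissible parities being allotted to $\{\xi,\xi'\}$ as a function of the behaviour of $f_\xi,f_{\xi'}$. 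Given a $\kappa$-block configuration and an $f_\gamma$ bounding its block functions, one picks $\eta^\ast\in J_0$ with $f_\gamma<^\ast f_{\eta^\ast}$ (possible by $<^\ast$-cofinality of $\soast{f_\eta}{\eta\in J_0}$); then $f_{\eta^\ast}$ dominates $f_{\xi_1}$ on a tail, and the richness of $J_1$ forces some $\eta'\in J_1$ with the oscillation of $f_{\eta^\ast}$ against $f_{\eta'}$ landing in the admissible parity of $\{\xi_0,\xi_1\}$, giving a colour-$1$ pair inside $X$ and contradicting $0$-homogeneity. $K_4$-freeness is a joint property of the two rules: the triangle-free within-block graph confines a would-be clique to at most two vertices per block, and the between-block parity allotment is arranged so that the remaining patterns (four blocks met once each, or a within-block edge cross-joined to one or two further blocks) cannot occur; this is the same case analysis as in \cite{987BH0}.

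The crux is the design of the oscillation rule together with the within-block ``richness'' graph: the between-block rule must be strong enough that, in any $\kappa$-block configuration, a dominating internal function in the lower block is forced into a colour-$1$ pair with \emph{some} member of the $\kappa^+$-sized internal set of a later block, yet weak enough that colour $1$ never completes a fourth mutually joined vertex---exactly the delicate balance in \cite{987BH0}, and the only point at which the unbounded scale must substitute for the enumeration of $\soafft{\kappa}{\kappa}$ that $2^\kappa=\kappa^+$ would supply. I expect this to be the main obstacle; the scale extraction and the ordinal-arithmetic reduction above are routine.
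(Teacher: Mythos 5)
Your preliminary reductions are fine and agree with what the paper does: from $\mathfrak{d}_\kappa=\kappa^+$ and the regularity of $\kappa$ one gets $\mathfrak{b}_\kappa=\mathfrak{d}_\kappa=\kappa^+$, hence an unbounded scale $\seq{d_\xi}{\xi<\lambda}$ in $\soafft{\kappa}{\kappa}$, and the block decomposition of $\lambda^2$ together with the observation that a set of order type $\lambda\kappa$ must meet $\kappa$ many blocks in sets of size $\lambda$ is exactly the shape of the final part of the paper's argument. But everything after that is a plan, not a proof, and the part you defer is the entire content of the theorem. You write that the crux is to design the oscillation/richness rules so that the scale ``must substitute for the enumeration of $\soafft{\kappa}{\kappa}$ that $2^\kappa=\kappa^+$ would supply,'' and that you ``expect this to be the main obstacle'' --- that obstacle is precisely what has to be overcome, and invoking ``the same case analysis as in [BH87]'' for $K_4$-freeness does not discharge it, because the Baumgartner--Hajnal case analysis is entangled with the enumeration you no longer have. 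No colouring is actually defined, so neither the $K_4$-freeness nor the failure of $0$-homogeneity can be checked.

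It is also worth noting that the architecture you sketch (a triangle-free within-block graph plus a between-block parity-of-oscillation rule) is not the one that is made to work in the paper. There, the graph $\Delta$ has \emph{no} within-block edges at all: every edge $\{\lambda\alpha+\beta,\lambda\gamma+\delta\}$ with $\lambda\alpha+\beta<\lambda\gamma+\delta$ satisfies the nesting condition $\alpha<\gamma<\delta<\beta$, and the neighbourhood $\Gamma(\lambda\alpha+\beta)$ is built by a $\kappa$-length induction which uses injections $b_\xi:\xi\hookrightarrow\kappa$, the scale value $d_\beta(\rho_\zeta)$ to decide how far to reach in each step, and surjections $g_\xi:\kappa\to\xi\times\xi$ with $\kappa$-sized fibres to guarantee that every pair of blocks is revisited cofinally often. $K_4$-freeness comes from explicit inequalities imposed on the $b_\xi$-images at each induction step (conditions such as \eqref{condition : greater than gammas}--\eqref{condition : greater than gamma-indices}), and the non-existence of an independent set of order type $\lambda\kappa$ comes from choosing $\beta$ so that $d_\beta$ dominates two functions $h_0,h_1$ extracted from the putative independent set. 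None of this is routine, and your proposal neither reproduces it nor supplies a working alternative; as it stands the proof has a gap exactly where the new idea is required.
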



\begin{proof}

For every ordinal $\xi < \lambda$ we fix the following things:
\begin{align}
\text{An injection } b_\xi & : \xi \hookrightarrow \kappa, \\
\text{an increasing function } d_\xi & : \kappa \longrightarrow \kappa \text{, and} \\
\text{sequences of ordinals } e(\xi, \cdot), f(\xi, \cdot) & : \kappa \longrightarrow \xi.
\end{align}
More specifically we do this in a way such that
\begin{align}
\soast{d_\xi}{\xi < \lambda} & \text{ is a scale of length } \lambda \text{ in } \soafft{\kappa}{\kappa} \text{ such that} \\
\label{condition : increasing}
& \forall \xi < \lambda\forall\nu < \kappa(d_\xi(\nu) > \nu) \text{ and} \\
g_\xi : \kappa & \longrightarrow \xi \times \xi, \\
\nu & \longmapsto \opair{e(\xi, \nu)}{f(\xi, \nu)} \\
\text{satisfies } & \forall x \in \xi \times \xi : \card{\pwim{g_\xi^{-1}}{x}} = \kappa.
\end{align}

We are going to define a graph $\opair{\lambda^2}{\Delta}$ where $\{\lambda \alpha + \beta, \lambda \gamma + \delta\} \in \Delta$ together with $\lambda \alpha + \beta < \lambda \gamma + \delta$ implies $\alpha < \gamma < \delta < \beta$.
(Note that every graph corresponds to a $2$-col\ou ring of its vertex-set where a pair of vertices gets one col\ou r if both vertices are connected to each other by an edge and the other if they are not.)
Given a $\beta < \lambda$ and an $\alpha < \beta$ we inductively define the sets $\Gamma(\lambda \alpha + \beta) = \soast{\lambda \gamma + \delta}{\alpha \leqslant \gamma \leqslant \delta \leqslant \beta \wedge \{\lambda\alpha + \beta, \lambda\gamma + \delta\} \in \Delta}$. The induction is layered. The top layer of the induction has length $\lambda$ and in step $\beta < \lambda$ we define all sets $\Gamma(\lambda \alpha + \beta)$ with $\alpha < \beta$. The second layer of the induction has length $\beta$ and in step $\alpha < \beta$ we define the set $\Gamma(\lambda \alpha + \beta)$. The third layer of the induction has length $\kappa$ where in every step fewer than $\kappa$ ordinals are added to $\Gamma(\lambda \alpha + \beta)$ as elements.

Suppose that we are in step $\zeta$ of the third layer of the induction and previous steps $\mu < \zeta$ have added the ordinals $\lambda \gamma_{\mu, \nu} + \delta_{\mu, \nu, \xi}$ $(\nu < \vartheta_\mu, \xi < \iota_{\mu, \nu})$ to $\Gamma(\lambda \alpha + \beta)$ and have defined $\sigma_\mu$. As induction hypothesis we assume that for all $\mu < \zeta$ we have ($\vartheta_\mu < \kappa$ and $\forall \nu < \vartheta_\mu : \iota_{\mu, \nu} < \kappa$). Using $\kappa$'s regularity we now choose a $\rho_\zeta < \kappa$ satisfying
\begin{align}
\label{condition : ascending}
\rho_\zeta & > \sup_{\xi < \zeta} \sigma_\xi, \\
\label{condition : greater than gammas}
b^{-1}_{e(\beta, \zeta)}(\rho_\zeta) & > \sup(e(\beta, \zeta) \cap \soast{\gamma_{\mu, \nu}}{\mu < \zeta \wedge \nu < \vartheta_\mu}), \\
\label{condition : greater than alphas}
b^{-1}_{e(\beta, \zeta)}(\rho_\zeta) & > \sup(\soast{\pwim{b_{\gamma_{\mu, \nu}}^{-1}}{b_{\gamma_{\mu, \nu}}(\alpha)}}{\mu < \zeta \wedge \nu < \vartheta_\mu \wedge e(\beta, \zeta) \leqslant \gamma_{\mu, \nu}}) , \\
\label{condition : greater than gamma-indices}
b^{-1}_{e(\beta, \zeta)}(\rho_\zeta) & > \sup(\soast{\pwim{b_{\gamma_{\mu, \nu}}^{-1}}{b_{\gamma_{\mu, \nu}}(\gamma_{\mu', \nu'})}}{\mu , \mu' < \zeta \wedge \nu, \nu' < \vartheta_\mu \wedge e(\beta, \zeta) \leqslant \gamma_{\mu', \nu'} < \gamma_{\mu, \nu}}).
\end{align}
and set $\sigma_\zeta \dfeq d_\beta(\rho_\zeta)$. Note that together with \eqref{condition : increasing} and \eqref{condition : ascending} this implies that
\begin{align}
\forall \xi < \kappa(\xi < \rho_\xi < \sigma_\xi).
\end{align}

Now let $\seq{\gamma_{\zeta, \nu}}{\nu < \vartheta_\zeta}$ be the increasing enumeration of the set
\begin{align}
\label{assignment : gammas}
\soast{b^{-1}_{e(\beta, \zeta)}(\xi)}{\xi \in \sigma_\zeta \setminus \rho_\zeta \wedge b^{-1}_{e(\beta, \zeta)}(\rho_\zeta) \leqslant b^{-1}_{e(\beta, \zeta)}(\xi)}
\end{align}
Now inductively for every $\mu < \vartheta_\zeta$ let
\begin{align}
\label{assignment : going right}
\varphi_{\zeta, \mu} \dfeq \sup\soast{\max(\sigma_\zeta, b_{f(\beta, \zeta)}(\delta))}{\exists \nu, \xi((\nu < \zeta \wedge \xi < \vartheta_\nu) \vee (\nu = \zeta \wedge \xi < \mu)) \wedge \exists \tau < \iota_{\nu, \xi} : \lambda \gamma_{\zeta, \mu} + \delta \in \Gamma(\lambda \gamma_{\nu, \xi} + \delta_{\nu, \xi, \tau})},
\end{align}
let $\iota_{\zeta, \mu}$ be the least ordinal $\iota$ such that $\varphi_{\zeta, \mu} + \iota \geqslant d_\beta(\varphi_{\zeta, \mu})$ and $\delta_{\zeta, \mu, \nu} \dfeq b^{-1}_{f(\beta, \zeta)}(\varphi_{\zeta, \mu} + \nu)$ for every $\nu < \iota_{\zeta, \mu}$.

This finishes the definition of $\Gamma(\lambda \alpha + \beta)$ and thereby the definition of $\Delta$. Now we check that $\Delta$ witnesses $\lambda^2 \doesntarrow (\lambda\kappa, 4)^2$.

First assume that there was a $Q \in [\lambda^2]^4$ such that $[Q]^2 \subseteq \Delta$. Suppose that $Q = \{\lambda \alpha_0 + \beta_0, \lambda \alpha_1 + \beta_1,  \lambda \alpha_2 + \beta_2, \lambda \alpha_3 + \beta_3\}$ with $\alpha_0 < \alpha_1 < \alpha_2 < \alpha_3$. Then, \wlg, we get $\alpha_0 < \alpha_1 < \alpha_2 < \alpha_3 < \beta_3 < \beta_2 < \beta_1 < \beta_0$. Now first assume that there are $\ell \in 4 \setminus 2$ and $k \in \ell \setminus 1$ such that $\{\lambda \alpha_0 + \beta_0, \lambda \alpha_k + \beta_k\}$ was added to $\Delta$ before $\{\lambda \alpha_0 + \beta_0, \lambda \alpha_\ell + \beta_\ell\}$---that is, $\lambda \alpha_k + \beta_k$ was added to $\Gamma(\lambda \alpha_0 + \beta_0)$ before $\lambda \alpha_\ell + \beta_\ell$. So assume that $\lambda\alpha_k + \beta_k$ was added in induction step $\zeta$ and $\lambda\alpha_\ell + \beta_\ell$ was added in induction step $\mu$ where $\zeta \leqslant \mu < \kappa$. Let $\nu < \vartheta_\zeta$ be such that $\gamma_{\zeta, \nu} = \alpha_k$ and let $\xi < \iota_{\zeta, \nu}$ be such that $\delta_{\zeta, \nu, \xi} = \beta_k$. Similarly, let $\tau < \vartheta_\mu$ be such that $\gamma_{\mu, \tau} = \alpha_\ell$ and let $\psi < \iota_{\mu, \tau}$ be such that $\delta_{\mu, \tau, \psi} = \beta_\ell$. Then $b_{f(\beta, \mu)}(\beta_\ell) = \varphi_{\mu, \tau} + \psi$. As
\begin{align}
\lambda \gamma_{\mu, \tau} + \delta_{\mu, \tau, \psi} & = \lambda \alpha_\ell + \beta_\ell \in \Gamma(\lambda \alpha_k + \beta_k) = \Gamma(\lambda \gamma_{\zeta, \nu} + \delta_{\zeta, \nu, \xi}) \\
\text{and either } & (\zeta < \mu \wedge \nu < \vartheta_\zeta) \text{ or } (\zeta = \mu \wedge \nu < \tau) \text{, by \eqref{assignment : going right}
 we have} \\
b_{f(\beta, \mu)}(\beta_\ell) = b_{f(\beta, \mu)}(\delta_{\mu, \tau, \psi}) & = \varphi_{\mu, \tau} + \psi \geqslant \varphi_{\mu, \tau} > b_{f(\beta, \mu)}(\delta_{\mu, \tau, \psi}) = b_{f(\beta, \mu)}(\beta_\ell),
\end{align}
a contradiction.

Now assume that this is not the case, \ie\ for $k \in 4 \setminus 1$ the set $\lambda \alpha_k + \beta_k$ was added to $\Gamma(\lambda \alpha_0 + \beta_0)$ in induction step $\mu_k$ where $\mu_3 \leqslant \mu_2 \leqslant \mu_1 < \kappa$. Also, for $k \in 4 \setminus 1$, let $\nu_k < \vartheta_{\mu_k}$ be such that $\alpha_k = \gamma_{\mu_k, \nu_k}$. Then, for $k \in \{1, 2\}$, if $\mu_k = \mu_{k + 1}$ then $\nu_{k + 1} < \nu_k$. We have
\begin{align}
\label{inequality : mus}
\mu_3 < \mu_2 < \mu_1 < \kappa
\end{align}
because if for some $k \in \{1, 2\}$ we have $\mu_k = \mu_{k + 1}$, then
$\alpha_{k + 1} = \gamma_{\mu_{k + 1}, \nu_{k +  1}} = \gamma_{\mu_k, \nu_{k + 1}} < \gamma_{\mu_k, \nu_k} = \alpha_k$, a contradiction. This proves \eqref{inequality : mus}.

Furthermore we have
\begin{align}
\label{result : sufficiently high}
e(\beta_\ell, \mu_k) \leqslant \alpha_{k + 1} \text{ for both } k \in \{1, 2\} \text{ and } \ell < k
\end{align}
since if there was a $k \in \{1, 2\}$ with $e(\beta_\ell, \mu_k) > \alpha_{k + 1} = \gamma_{\mu_{k + 1}, \nu_{k + 1}}$, then by \eqref{condition : greater than gammas}, $b^{-1}_{e(\beta_\ell, \mu_k)}(\rho_{\mu_k}) > \gamma_{\mu_{k + 1}, \nu_{k + 1}}$.
By \eqref{assignment : gammas}, we have $\gamma_{\mu_k , \nu_k} \geqslant b^{-1}_{e(\beta_\ell, \mu_k)}(\rho_{\mu_k})$ and hence $\alpha_k = \gamma_{\mu_k, \nu_k} > \gamma_{\mu_{k + 1}, \nu_{k + 1}} = \alpha_{k + 1}$, a contradiction. This proves \eqref{result : sufficiently high}.

Now we get
\begin{align}
\label{result : towards a contradiction}
b_{\alpha_3}(\alpha_2) < b_{\alpha_3}(\alpha_1)
\end{align}
as otherwise  $b_{\alpha_3}(\gamma_{\mu_2, \nu_2}) = b_{\alpha_3}(\alpha_2) \geqslant b_{\alpha_3}(\alpha_1) = b_{\alpha_3}(\gamma_{\mu_1, \nu_1})$ and as $b_{\alpha_3}$ is a bijection, $b_{\alpha_3}(\gamma_{\mu_2, \nu_2}) > b_{\alpha_3}(\gamma_{\mu_1, \nu_1})$. Then, by \eqref{condition : greater than gamma-indices} in combination with \eqref{result : sufficiently high} for $\ell = 0$, we have $b^{-1}_{e(\beta_0, \mu_1)}(\rho_{\mu_1}) > \gamma_{\mu_1, \nu_1}$. But by \eqref{assignment : gammas}, we have $\gamma_{\mu_1, \nu_1} \geqslant b^{-1}_{e(\beta_0, \mu_1)}(\rho_{\mu_1})$, a contradiction. This proves \eqref{result : towards a contradiction}.

Now, by \eqref{condition : greater than alphas} in combination with \eqref{result : sufficiently high} for $\ell = 1$ and $k = 2$ and \eqref{result : towards a contradiction}, we have $b^{-1}_{e(\beta_1, \mu_2)}(\rho_{\mu_2}) > \alpha_2$. But then, by \eqref{assignment : gammas}, we get $\alpha_2 \geqslant b^{-1}_{e(\beta_1, \mu_2)}(\rho_{\mu_2})$, a contradiction.

This refutes the existence of a $Q \in [\lambda^2]^4$ such that $[Q]^2 \subseteq \Delta$.

Now assume that there is an $H \in [\lambda^2]^{\lambda\kappa}$ such that $[H]^2 \subseteq [\lambda^2]^2 \setminus \Delta$. We define
\begin{align}
A \dfeq \soast{\mu < \lambda}{\card{\soast{\nu < \lambda}{\lambda \mu + \nu \in H}} = \lambda}.
\end{align}
It is easy to show that $A \in [\lambda]^\kappa$. Let $\xi \dfeq \sup(A)$ and $\tau \dfeq \min\soast{\mu \in \lambda \setminus \xi}{\forall \nu \in A : \card{\soast{\psi < \mu}{\lambda \nu + \psi \in H}} = \kappa}$. Now we define two functions
\begin{align}
h_0 : \kappa & \longrightarrow \kappa \\
\mu & \longmapsto \min\soast{\nu \in \kappa \setminus \mu}{b_\xi^{-1}(\nu) \in A} \\
h_1 : \kappa & \longrightarrow \kappa \\
\mu & \longmapsto \min\soast{\nu \in \kappa \setminus \mu}{\forall \gamma \in A(b_\xi(\gamma) \leqslant \mu \rightarrow \exists \delta(b_\tau(\delta) \in \nu \setminus \mu \wedge \lambda \gamma + \delta \in H))}.
\end{align}
Let $\alpha \dfeq \min(A)$. Moreover, let $\beta \in \lambda \setminus \max(\xi, \tau)$ be such that $d_\beta$ eventually dominates $h_i$ for both $i < 2$ and $\lambda \cdot \alpha + \beta \in H$. Let $\mu < \kappa$ be such that for both $i < 2$ the sequence $d_\beta$ properly dominates $h_i$ above $\mu$. Let $\zeta \in \kappa \setminus (\max(\mu, b_\xi(\alpha)) + 1)$ such that $g_\beta(\zeta) = \opair{\xi}{\tau}$. We have $\rho_\zeta \geqslant \zeta$. As $d_\beta$ properly dominates $h_0$ at $\rho_\zeta$, the definition of $h_0$ implies $\sigma_\zeta = d_\beta(\rho_\zeta) > \min\soast{\nu \in \kappa \setminus \rho_\zeta}{b_\xi^{-1}(\nu) \in A}$. So there is a $\nu < \vartheta_\zeta$ such that $\gamma_{\zeta, \nu} \in A$, \cf\ \eqref{assignment : gammas}.

Also, $\varphi_{\zeta, \nu} \geqslant \sigma_\zeta \geqslant \rho_\zeta \geqslant \zeta > \mu$. Therefore, $\varphi_{\zeta, \nu} + \iota_{\zeta, \nu} = d_\beta(\varphi_{\zeta, \nu}) > h_1(\varphi_{\zeta, \nu})$. As $b_\xi(\gamma_{\zeta, \nu}) < \sigma_\zeta \leqslant \varphi_{\zeta, \nu}$ there is a $\delta < \tau$ such that $\mu \leqslant b_\tau(\delta) < h_1(\varphi_{\zeta, \nu}) < d_\beta(\varphi_{\zeta, \nu})$ and $\lambda \gamma_{\zeta, \nu} + \delta \in H$. But this means that there is a $\psi < \iota_{\zeta, \nu}$ such that $\delta = \delta_{\zeta, \nu, \psi}$ and $\lambda \gamma_{\zeta, \nu} + \delta_{\zeta, \nu, \psi} \in H \cap \Gamma(\lambda \alpha + \beta) \subseteq \Delta$. As $\lambda \alpha + \beta \in H$ we have $\{\lambda \alpha + \beta, \lambda \gamma_{\zeta, \nu} + \delta_{\zeta, \nu, \psi}\} \in [H]^2$ showing $[H]^2 \not \subseteq [\lambda^2]^2 \setminus \Delta$, a contradiction.
\end{proof}

\begin{corollary}
If $\kappa$ is regular  and $\lambda = \kappa^+ = \mathfrak{d}_\kappa$, then $\alpha \doesntarrow (\lambda\kappa, 4)^2$ for all $\alpha < \lambda^2\kappa$.
\end{corollary}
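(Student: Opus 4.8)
The plan is to derive the corollary from Theorem~\ref{theorem : quadruples and scales} by combining the trivial downward-monotonicity of negative partition relations with a disjoint-union construction, the whole argument turning on the regularity of $\kappa$.

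\emph{Reduction to $\lambda^2\xi$.} First I would record that if $\alpha\leqslant\beta$ and $\beta\doesntarrow(\gamma,\delta)^2$, then $\alpha\doesntarrow(\gamma,\delta)^2$: simply restrict a witnessing colouring of $[\beta]^2$ to $[\alpha]^2$, noting that a homogeneous set inside $\alpha$ would be one inside $\beta$. Since $\kappa$ is a regular, hence limit, cardinal, $\lambda^2\kappa=\sup_{\xi<\kappa}\lambda^2\xi$, so every $\alpha<\lambda^2\kappa$ lies below $\lambda^2\xi$ for some $\xi<\kappa$. It therefore suffices to prove $\lambda^2\xi\doesntarrow(\lambda\kappa,4)^2$ for every $\xi<\kappa$.

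\emph{The construction.} Let $\Delta$ be the graph on $\lambda^2$ built in the proof of Theorem~\ref{theorem : quadruples and scales}; it has no $K_4$ and no independent set of order type $\lambda\kappa$. Identify $\lambda^2\xi$ with $\xi$ successive blocks, each an order-isomorphic copy of $\lambda^2$, and let $\Delta_\xi$ be the disjoint union of $\xi$ copies of $\Delta$ on these blocks (two points are joined iff they lie in a common block and their images in $\lambda^2$ are $\Delta$-joined). A $K_4$ in $\Delta_\xi$ would be confined to one block, contradicting the $K_4$-freeness of $\Delta$; so there is no homogeneous quadruple for the edge colour. Next suppose $I\subseteq\lambda^2\xi$ were $\Delta_\xi$-independent of order type $\lambda\kappa$, and let $I_i$ be the part of $I$ in the $i$-th block. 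Each $I_i$ is $\Delta$-independent, so $\otp(I_i)<\lambda\kappa$, and hence $\otp(I_i)<\lambda\rho_i$ for some $\rho_i<\kappa$ (using $\lambda\kappa=\sup_{\rho<\kappa}\lambda\rho$). Setting $\rho\dfeq\sup_{i<\xi}\rho_i$ we get $\rho<\kappa$ by regularity, whence $\otp(I)=\sum_{i<\xi}\otp(I_i)\leqslant\lambda\cdot\rho\cdot\xi<\lambda\kappa$, because $\rho\cdot\xi<\kappa$. This contradiction shows $\Delta_\xi$ has no independent set of order type $\lambda\kappa$ either, so $\Delta_\xi$ witnesses $\lambda^2\xi\doesntarrow(\lambda\kappa,4)^2$, as needed.

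The only step that is not purely formal is the final inequality: fewer than $\kappa$ blocks cannot pool their individually short ($<\lambda\kappa$) independent sets into one of order type $\lambda\kappa$. This is exactly where the regularity of $\kappa$ is used, and it is also why ``order type $<\lambda\kappa$'' is the stable quantity to track (equivalently, why $\lambda\kappa$ is additively indecomposable). Everything else --- the monotonicity of the arrow relation and the triviality that a disjoint union of $K_4$-free graphs is $K_4$-free --- requires no work, so I would expect the whole corollary to occupy only a few lines once Theorem~\ref{theorem : quadruples and scales} is in hand.
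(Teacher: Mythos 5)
Your argument is correct and is exactly the standard derivation the paper implicitly relies on (the corollary is stated there without proof): every $\alpha<\lambda^2\kappa$ sits inside $\lambda^2\xi$ for some $\xi<\kappa$, negative relations pass down to initial segments, and the disjoint union of $\xi$ copies of the witnessing graph kills $K_4$'s trivially while the regularity of $\kappa$ together with the $\kappa$-indecomposability of $\lambda\kappa$ (via $\otp(I)=\sum_{i<\xi}\otp(I_i)\leqslant\lambda\cdot\rho\cdot\xi<\lambda\kappa$) kills independent sets of type $\lambda\kappa$. All the order-type bookkeeping checks out, in particular that the $I_i$ are consecutive so the ordinary ordinal sum applies.
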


\section{Partition Relations from the Stick-Principle}\label{sec : stick}
The principle $\stick(\kappa)=\kappa^+$ can be used directly to guess an infinite subset of a homogeneous set. In some cases, a diagonal process which takes care of all of the guesses can be used to obtain a negative partition relation. This is the idea behind Takahashi's result that $\stick=\aleph_1$ implies $\omega_1^2 \doesntarrow (\omega_1^2, 3)^2$ (and the result generalizes straightforwardly to arbitrary $\kappa$).

Combining this method with the argument of Todor\v{c}evi\'c \cite{989T0} gives the following:

\begin{theorem}\label{stickdushnikmiller}
Let $\kappa$ be a regular cardinal. Then $\stick(\kappa) = \kappa^+$ implies $\kappa^+ \doesntarrow (\kappa^+, (\kappa:2))^2$.
\end{theorem}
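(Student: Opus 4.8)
The plan is to realise the negative relation by a graph $G$ on $\kappa^+$, colouring a pair $\{\alpha,\beta\}$ with $1$ exactly when it is a $G$-edge. Writing $N^-(\gamma)=\{\alpha<\gamma:\{\alpha,\gamma\}\in G\}$ for the lower neighbours of $\gamma$, there are two things to arrange. For the first colour, $G$ should have no independent set of size $\kappa^+$, which (as $\kappa^+$ is regular) is the same as having no $0$-homogeneous set of order type $\kappa^+$; it is enough that every member $A_i$ of a fixed $\stick(\kappa)$-family $\{A_i:i<\kappa^+\}$ contains a $G$-edge, since any $0$-homogeneous $Y$ of size $\kappa^+$ contains some $A_i$ by the defining property of $\stick(\kappa)=\kappa^+$, and then $A_i$ would be independent. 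For the second colour, $G$ should contain no configuration of type $(\kappa:2)$, i.e.\ no $X$ of order type $\kappa$ lying below two points $\beta_0<\beta_1$ with all cross-pairs $G$-edges; such an $X$ is contained in $N^-(\beta_0)\cap N^-(\beta_1)$, so it suffices to ensure $|N^-(\alpha)\cap N^-(\beta)|<\kappa$ whenever $\alpha<\beta$ (a convenient sufficient condition being that every $N^-(\gamma)$ has size $<\kappa$).

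First I would normalise the family: shrinking each member to order type $\kappa$ and re-indexing (placing a copy of each member at some index above its supremum, via a bijection $\kappa^+\cong\kappa^+\times\kappa^+$) lets us assume $\otp(A_i)=\kappa$ and $A_i\subseteq i$ for all $i$, and the $\stick$-property survives because the new family still contains a subset of every old member. Then $G$ is built by recursion on $\gamma<\kappa^+$: at stage $\gamma$ I decide $N^-(\gamma)\subseteq\gamma$ and mark some indices \emph{killed} (index $i$ is killed once $A_i$ has acquired a $G$-edge). Let $P_\gamma$ be the set of as-yet-unkilled $i$ with $\gamma\in A_i$ and $\min A_i<\gamma$. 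If $|P_\gamma|\leqslant\kappa$, pick one point of $A_i\cap\gamma$ for each $i\in P_\gamma$, let these be $N^-(\gamma)$, and kill all of $P_\gamma$. If $|P_\gamma|>\kappa$ (hence $=\kappa^+$) a pigeonhole step saves the day: since $|\gamma|\leqslant\kappa$ and $\sum_{\delta<\gamma}|\{i\in P_\gamma:\delta\in A_i\}|=\sum_{i\in P_\gamma}|A_i\cap\gamma|\geqslant\kappa^+$, the set $H_\gamma$ of $\delta<\gamma$ with $\delta\in A_i$ for $\kappa^+$ many $i\in P_\gamma$ is non-empty, and only $\leqslant\kappa$ of the $i\in P_\gamma$ satisfy $A_i\cap\gamma\subseteq\gamma\setminus H_\gamma$ (each of the $\leqslant\kappa$ ordinals outside $H_\gamma$ lies in $\leqslant\kappa$ of the $A_i$); so putting $H_\gamma$ into $N^-(\gamma)$, together with $\leqslant\kappa$ further points for the residue, kills all of $P_\gamma$ again. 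Either way every index in $P_\gamma$ is killed at stage $\gamma$; letting $\gamma_i$ be the second element of $A_i$ we have $\gamma_i<i$ and $i\in P_{\gamma_i}$ (the only element of $A_i$ below $\gamma_i$ is $\min A_i$), so $i$ is killed by stage $\gamma_i$. Hence every $A_i$ ends up with a $G$-edge, settling the first colour.

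The step I expect to be the real obstacle is the second colour. The recursion just described can be forced to make $|N^-(\gamma)|=\kappa$ (precisely when $\kappa^+$ stick sets first become active at the same $\gamma$ and spread thinly below it, so that $H_\gamma$ itself has size $\kappa$), so one cannot simply invoke "$|N^-(\gamma)|<\kappa$'' and must control the pairwise intersections directly. Here I would follow Todor\v{c}evi\'c's treatment of the $\omega+2$ case: carry alongside the recursion an auxiliary coherent assignment, insisting that $N^-(\gamma)\subseteq C_\gamma$ where each $C_\gamma\subseteq\gamma$ is chosen with $|C_\alpha\cap C_\beta|<\kappa$ for $\alpha<\beta$ --- for instance $C_\gamma$ an increasing cofinal sequence in $\gamma$ (whose pairwise intersections are automatically $<\kappa$, being proper initial segments of sequences of order type $\leqslant\kappa$) padded with a set of size $<\kappa$ of \emph{decreed} points, chosen adaptively both to meet the pending stick sets and to contain the heavy sets $H_\gamma$. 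The delicate points are then that the decreed parts, being individually of size $<\kappa$, do not accumulate into a large common part, and that one can always fit $C_\gamma$ with the $(<\kappa)$-many pending sets it must serve (and with $H_\gamma$ when $|P_\gamma|=\kappa^+$) without breaking coherence. Making this bookkeeping close is the heart of the matter; the role of $\stick(\kappa)=\kappa^+$ is exactly to reduce "no large independent set'' to the manageable task of guessing a $\kappa$-sized subset of every would-be homogeneous set, in the manner of Takahashi's method.
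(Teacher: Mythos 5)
There is a genuine gap, and you have located it yourself: the bookkeeping that is supposed to reconcile ``every stick set acquires an internal edge'' with ``pairwise lower neighbourhoods have size $<\kappa$'' is never closed, and as described it cannot be. In your bad case $|P_\gamma|=\kappa^+$ you are forced to put the heavy set $H_\gamma$ (which can have size $\kappa$ and is dictated by the stick family and the history of the recursion, not chosen by you) into $N^-(\gamma)$. Nothing prevents two such stages $\gamma<\gamma'$ from having $|H_\gamma\cap H_{\gamma'}|=\kappa$, and then $H_\gamma\cap H_{\gamma'}\subseteq N^-(\gamma)\cap N^-(\gamma')$ is a $1$-homogeneous set of type $(\kappa:2)$; wrapping the $H_\gamma$'s inside a coherent almost-disjoint family $\langle C_\gamma\rangle$ is exactly as hard as the problem you started with, since the $H_\gamma$'s are not free parameters. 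The deeper issue is that your reduction of colour $0$ is too demanding: you insist that \emph{every} guessed set contain an edge, and it is this insistence that generates the uncontrollable case $|P_\gamma|=\kappa^+$.

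The paper escapes by weakening both sides of the trade. It builds an almost disjoint family $\langle B_\alpha:\alpha<\kappa^+\rangle$ with $B_\alpha=N^-(\alpha)\subseteq\alpha$, $|B_\alpha|=\kappa$, and only requires $B_\alpha\cap d_\beta\neq\emptyset$ for those $\beta<\alpha$ such that $d_\beta$ is \emph{not} contained in a $<\kappa$-union of earlier $B_{\alpha'}$'s; this weaker demand is always satisfiable (pick $x^\alpha_i\in d_{g_\alpha(i)}\setminus\bigcup_{j<i}B_{g_\alpha(j)}$ when possible) and automatically preserves almost-disjointness, so colour $1$ is free. The price is that for a $0$-homogeneous $A$ of size $\kappa^+$ one must produce a guessed $d_\beta\subseteq A$ that is not (almost) covered by $<\kappa$ many $B_{\alpha'}$'s --- then any $\alpha\in A\setminus(\beta+1)$ gives an edge from $d_\beta$ into $A$ (note the edge is not required to lie inside $d_\beta$). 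Proving such a $d_\beta$ exists is the real content, and it needs an idea absent from your proposal: a \emph{two-level} stick family $d_\gamma=\bigcup_{\xi\in e_\gamma}e_\xi$, where $\langle e_\xi\rangle$ witnesses $\stick(\kappa)=\kappa^+$. One splits on whether $C=\soast{\alpha}{|B_\alpha\cap A|=\kappa}$ has size $\leqslant\kappa$ (then guess inside $A\setminus\bigcup_{\alpha\in C}B_\alpha$) or $\kappa^+$ (then guess, at the second level, a set of indices $\xi(i)$ of pairwise disjoint $e_{\xi(i)}\subseteq A$ chosen so that no $<\kappa$-union of $B$'s can cover their union). I would recommend either importing this two-level device or finding a genuinely new argument for why an internal edge in every guessed set is compatible with almost disjoint neighbourhoods; as written, the proposal does not yield the theorem.
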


\begin{proof}
In the proof below, we use the adverb \emph{almost} to mean ``modulo sets of size $<\kappa$''.

Let $\langle e_\alpha:\alpha<\kappa^+\rangle$ be a sequence of order-type $\kappa$ subsets witnessing $\stick(\kappa)=\kappa^+$. Let $\langle d_\alpha\mid \alpha<\kappa^+\rangle$ be an enumeration of $\left\{\bigcup_{\beta\in e_\gamma} e_\beta\mid \gamma<\kappa^+\right\}$
so that $d_\alpha\subseteq \alpha$. Note that this also witnesses $\stick=\kappa^+$. Define by induction on $\alpha<\kappa^+$ a sequence $\langle B_\alpha\mid \alpha<\kappa^+\rangle$ of size $\kappa$ subsets of $\kappa^+$ so that:
\begin{enumerate}
\item $B_\alpha\subseteq \alpha$.
\item $|B_\alpha\cap B_{\alpha'}|<\kappa$ if $\alpha\neq\alpha'$.
\item $B_\alpha\cap d_\beta\neq \emptyset$ if $\beta<\alpha$ and $d_\beta$ is not contained in any $<\kappa$-union of sets of the form $B_{\alpha'}$ for $\alpha'<\alpha$.
\end{enumerate}
For the construction at stage $\alpha$, let $g_\alpha:\kappa\rightarrow\alpha$ be a surjection. Then inductively construct $B_\alpha$ to be the set $\soast{x^\alpha_i}{i<\kappa}$, where $$x^\alpha_i\in d_{g_\alpha(i)}\setminus \bigcup_{j<i} B_{g_\alpha(j)}$$
if such exists.

Let the coloring be defined by $c(\alpha',\alpha)=1$ iff $\alpha'<\alpha$ and $\alpha'\in B_\alpha$. There is no $1$-homogenous set $\soast{ \alpha_\xi}{\xi<\kappa+2}$ of type $(\kappa:2)$ since otherwise $\soast{\alpha_\xi}{\xi<\kappa}\subseteq B_{\alpha_\kappa}\cap B_{\alpha_{\kappa+1}}$, contradicting (2) in the construction of the $B_\alpha$'s.

Suppose there is a $0$-homogeneous set $A$ of size $\kappa^+$. In the construction we ensured that the $B_\alpha$'s met $d_\alpha$ often enough so that the following claim gives a contradiction.
\begin{claim*}\label{c1}
There is $\beta<\kappa^+$ so that $d_\beta\subseteq A$ and $d_\beta$ is not almost contained in any $<\kappa$-union of sets of the form $B_{\alpha'}$ for $\alpha'<\kappa^+$.
\end{claim*}
\begin{proofofclaim}
The proof of the claim splits into two cases depending on the cardinality of $C:=\soast{\alpha}{|B_\alpha\cap A|=\kappa}$.

The first case is that $|C|\le \kappa$. Then $|A\setminus\bigcup_{\alpha\in C} B_\alpha|>\kappa$, so there is some $\beta<\kappa^+$ with $d_\beta\subseteq  A\setminus\bigcup_{\alpha\in C} B_\alpha$. This case is finished by observing that $d_\beta$ has intersection of size $<\kappa$ with every $B_\alpha$.

The second case is that $|C|=\kappa^+$. For each $i<\kappa^+$, define $\xi(i)$ and $\zeta(i)$ so that
\begin{itemize}
\item $e_{\xi(i)}\subseteq \bigcup_{a\in C} B_\alpha\cap A$.
\item $|B_{\zeta(i)}\cap e_{\xi(i)}|= \kappa$, if such exists, and $\zeta(i)$ is undefined otherwise.
\item $e_{\xi(i)}\cap\bigcup_{j<i}(B_{\zeta(j)}\cup e_{\xi(j)})=\emptyset$.
\end{itemize}

There is some $\gamma<\kappa^+$ so that $e_\gamma\subseteq \soast{\xi(i)}{i<\kappa^+}$. Let $\beta$ be so that $d_\beta=\bigcup_{\xi\in e_\gamma} e_\xi$ and let $I=\soast{i<\kappa^+}{\xi(i)\in e_\gamma}$.

Suppose for a contradiction that $d_\beta$ is almost contained in $\bigcup_{\nu<\eta} B_{\alpha_\nu}$ for some $\alpha_\nu$, $\nu<\eta$. There are two subcases.

In the first subcase, there are $\kappa$ many $i\in I$ on which $\zeta(i)$ is defined. For each such $i$, $|B_{\zeta(i)}\cap e_{\xi(i)}|=\kappa$ so there is some $k(i)< \eta$ so that $|B_{\zeta(i)}\cap B_{\alpha_{k(i)}}|=\kappa$, and since the $B_\alpha$ are almost disjoint, $\zeta(i)=\alpha_{k(i)}$. But this is a contradiction since $|I|=\kappa$ and $\zeta$ is injective, but the range of $i\mapsto k(i)$ has size $<\kappa$.

In the second subcase, there are $<\kappa$ many $i\in I$ on which $\zeta(i)$ is defined, so we can take $\xi^*\in e_\gamma$ so that $|e_{\xi^*}\cap B_{\alpha_k}|<\kappa$ for all $k< n$. But this implies that $|e_{\xi^*}|<\kappa$, contradiction.
\end{proofofclaim}

To finish the proof of the theorem, take $\alpha\in A$ larger than $\beta$. By (3) in the construction of the $B_\alpha$'s, there is some $\alpha'$ with $\alpha' \in B_\alpha\cap d_\beta$. Now $\alpha'\in d_\beta\subseteq \beta<\alpha$, so $c(\alpha',\alpha)=1$. Since $d_\beta\subseteq A$, we have $\alpha',\alpha\in A$, contradicting $0$-homogeneity of $A$.
\end{proof}

We now give a few remarks regarding the relationship between the $\stick$ principle and polar\iz ed partition relations. We conjecture that the hypothesis of Theorem \ref{negpolarized} cannot be changed to $\stick=\aleph_1$. Curiously, however, $\stick=\aleph_1$ actually gives a polar\iz ed partition relation involving larger cardinals.
\begin{proposition}
For any regular cardinal $\kappa>\stick$,
\begin{align*}
\binom{\kappa}{\omega_1} \longrightarrow\binom{\kappa}{\omega}^{1,1}_2.
\end{align*}
\end{proposition}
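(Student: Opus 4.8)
The plan is a two-step pigeonhole argument using only the regularity of $\kappa$ and the strict inequality $\kappa > \stick$. Write $\stick = \stick(\omega,\omega_1)$ and fix a witnessing family $\seq{s_\alpha}{\alpha < \stick}$, so that each $s_\alpha \in [\omega_1]^\omega$ and every $y \in [\omega_1]^{\omega_1}$ contains some $s_\alpha$. Let $f \colon \kappa \times \omega_1 \to 2$ be given and for $\xi < \kappa$ set $f_\xi(\beta) \dfeq f(\xi,\beta)$. Since $\omega_1 = f_\xi^{-1}(0) \cup f_\xi^{-1}(1)$, at least one of these sets is uncountable, hence lies in $[\omega_1]^{\omega_1}$; fix such a colour $c(\xi) \in 2$.

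Next I would apply regularity of $\kappa$ twice. First, $\kappa = c^{-1}(0) \cup c^{-1}(1)$; since $\kappa$ is regular and uncountable (recall $\stick \ge \aleph_1$ always, so $\kappa \ge \aleph_2$), one of the two pieces has size $\kappa$, giving $i < 2$ and $X_0 \in [\kappa]^\kappa$ with $c(\xi) = i$ for all $\xi \in X_0$. For each $\xi \in X_0$ the set $f_\xi^{-1}(i)$ is in $[\omega_1]^{\omega_1}$, so the stick property yields $\alpha(\xi) < \stick$ with $s_{\alpha(\xi)} \subseteq f_\xi^{-1}(i)$. Second, the map $\xi \mapsto \alpha(\xi)$ partitions the size-$\kappa$ set $X_0$ into at most $\stick < \kappa$ fibres, so by regularity of $\kappa$ some fibre has size $\kappa$: there are $\alpha^\ast < \stick$ and $X \in [X_0]^\kappa$ with $\alpha(\xi) = \alpha^\ast$ for all $\xi \in X$.

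Finally, set $Y \dfeq s_{\alpha^\ast} \in [\omega_1]^\omega$ and note that $X$, being a subset of $\kappa$ of cardinality $\kappa$, has order type $\kappa$. For any $\xi \in X$ and $\beta \in Y$ we have $\beta \in s_{\alpha^\ast} = s_{\alpha(\xi)} \subseteq f_\xi^{-1}(i)$, so $f(\xi,\beta) = i$; thus $f \upharpoonright (X \times Y)$ is constant with value $i$, which is exactly $\binom{\kappa}{\omega_1} \to \binom{\kappa}{\omega}^{1,1}_2$.

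There is no genuine obstacle. The key idea is that the stick family attaches to each column $\xi \in X_0$ a \emph{fixed} countable set, drawn from a pool of size $\stick < \kappa$, that sits inside the majority colour of that column; regularity of $\kappa$ then lets one thin $X_0$ to a set of $\kappa$-many columns all using the same such countable set, which becomes the second coordinate of the homogeneous rectangle. The only points requiring a word of care are the two appeals to the regularity of $\kappa$ — once with the bound $2 < \kappa$, once with $\stick < \kappa$ — and these are precisely where both hypotheses of the proposition are used.
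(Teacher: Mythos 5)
Your argument is correct and is essentially the paper's own proof: first stabilise the majority colour of each column by regularity of $\kappa$, then use the $\stick$-witnessing family to attach a countable set to each surviving column and stabilise that choice using $\stick<\kappa$ and regularity again. The only cosmetic difference is that you spell out the order-type remark and the bound $\kappa\geqslant\aleph_2$, which the paper leaves implicit.
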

\begin{proof}
Suppose $c:\kappa\times\omega_1\rightarrow 2$ is any col\ou ring. For every $\alpha<\kappa$, $i\in\{0,1\}$, let $S^i_\alpha$ be the set $\soast{\beta<\omega_1}{c(\alpha,\beta)=i}$. For each $\alpha<\kappa$, there is $i(\alpha)$ so that $|S^{i(\alpha)}_\alpha|=\aleph_1$, so there are $B_0\in[\kappa]^\kappa$ and $i^*\in \{0,1\}$ so that if $\alpha\in B_0$, then $i(\alpha)=i^*$.

Let $\langle d_\alpha:\alpha<\stick\rangle$ be a sequence witnessing $\stick$. For every $\alpha\in B_0$, choose $\beta(\alpha)<\omega_1$ so that $d_\beta\subseteq S^{i^*}_\alpha$. Since $\kappa>\stick$ is regular, there is a $B_1\in[B_0]^\kappa$ and $\beta^*<\stick$ so that $d_{\beta^*}\subseteq S^{i^*}_\alpha$ for all $\alpha\in B_1$, and therefore $c\upharpoonright (B_1\times d_{\beta^*})$ is constant.
\end{proof}

\section{Partition Relations opposite the Unbounding Number and the Stick-Principle}

Takahashi proved in \cite{987T1} that one can derive $\omega_1\omega \doesntarrow (\omega_1\omega, 3)^2$ in the system $\ZFC + \mathfrak{d} = \aleph_1 + \stick$. The same can be done in the system $\ZFC + \mathfrak{b} = \aleph_1 + \stick$ as shown in \cite{017LW0}. We now show that the latter system is also sufficient to derive the negative partition relation shown to follow from $\CH$ by Baumgartner and Hajnal in \cite{987BH0}.

\begin{theorem}[\cite{017LW0}]
Suppose that $\kappa$ is regular and $\lambda = \kappa^+ = \mathfrak{b}_\kappa = \stick(\kappa)$. Then $\lambda\kappa \doesntarrow (\lambda\kappa, 3)^2$.
\end{theorem}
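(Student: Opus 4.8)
The plan is to follow \cite{017LW0}, whose argument is a stick-theoretic variant of Larson's proof of Fact~\ref{fact : previous} \eqref{item : larson}: the unbounded scale furnished by $\mathfrak{b}_\kappa = \kappa^+$ takes the place of the \emph{dominating} scale Larson extracts from $\mathfrak{d}_\kappa = \kappa^+$, and $\stick(\kappa) = \kappa^+$ repairs the resulting loss of domination. I would construct a graph $G$ on the ordinal $\lambda\kappa$ --- which I view as $\kappa$ consecutive copies of $\lambda = \kappa^+$, the $i$-th block being the interval $\roin{\lambda i}{\lambda(i+1)}$ --- which is triangle-free and has no independent set of order type $\lambda\kappa$. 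Reading colour $1$ as the edge relation, such a $G$ witnesses $\lambda\kappa \doesntarrow (\lambda\kappa, 3)^2$. The ingredients are: a $<^*$-increasing scale $\seq{d_\xi}{\xi < \lambda}$ in $\soafft{\kappa}{\kappa}$ which is $<^*$-unbounded, obtained as usual from $\mathfrak{b}_\kappa = \kappa^+$ and normalized as in the proof of Theorem~\ref{theorem : quadruples and scales} (each $d_\xi$ strictly increasing, $d_\xi(\nu) > \nu$); a stick sequence $\seq{e_\eta}{\eta < \lambda}$ with each $e_\eta \in [\lambda]^\kappa$, $e_\eta \subseteq \eta$, witnessing $\stick(\kappa) = \kappa^+$; and, for each $\xi < \lambda$, an injection $b_\xi : \xi \hookrightarrow \kappa$. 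I would also use the elementary strengthening that, since $\card{e_\eta} = \kappa < \lambda$, for every $B \in [\lambda]^\lambda$ the set $\soast{\eta < \lambda}{e_\eta \subseteq B}$ has size $\lambda$ (iterate the stick property along a $\subseteq$-decreasing chain of $\lambda$-sized subsets of $B$).

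I would build $G$ by transfinite recursion, processing the vertices by increasing position in their block (second coordinate), and within a fixed position by increasing block. Every edge of $G$ will join vertices in two \emph{distinct} blocks, so each block is an independent set --- but of order type only $\lambda < \lambda\kappa$. When a vertex $y$ is processed, I decide its edges to the vertices of strictly smaller position, subject to the single invariant that the set of neighbours assigned to $y$ is independent in the part of $G$ built so far. Since the topmost vertex of a triangle would see the other two as adjacent members of its neighbour set, keeping this invariant for every $y$ makes $G$ triangle-free automatically; the content is in keeping neighbourhoods rich. The demands to be met along the recursion are indexed by pairs of blocks $i < j$ together with stick-indices: place an edge between some $\lambda i + \gamma$ with $\gamma \in e_{\eta'}$ and some $\lambda j + \beta$ with $\beta \in e_{\eta}$. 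When a demand is honoured at $\lambda j + \beta$, the partner $\gamma$ is picked inside $e_{\eta'}$ (size $\kappa$) avoiding the positions of block $i$ already adjacent to an earlier member of the neighbour set of $\lambda j + \beta$; if each vertex is made to meet fewer than $\kappa$ demands and each (vertex, block) pair contributes at most one edge, this forbidden set has size $< \kappa$, so the regularity of $\kappa$ leaves a legal choice. The scale $\seq{d_\xi}{\xi < \lambda}$ is used to schedule the demands and to fix exactly which vertex and stick-indices handle a given pair of blocks.

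For the verification, suppose towards a contradiction that $A \in [\lambda\kappa]^{\lambda\kappa}$ is independent, and write $A_i$ for the set of positions of $A$ in block $i$. From $\otp(A) = \lambda\kappa = \sum_{i < \kappa}\lambda$ and $\otp(A_i) \leqslant \lambda$ one gets that $I \dfeq \soast{i < \kappa}{\otp(A_i) = \lambda}$ is cofinal in $\kappa$, so for $i \in I$ we have $A_i \in [\lambda]^\lambda$, whence $e_\eta \subseteq A_i$ for $\lambda$-many $\eta$. The recursion is arranged so that, among the demands honoured for pairs of blocks drawn from $I$, there is one whose two endpoints are forced --- via the unboundedness (merely unboundedness, not domination) of $\seq{d_\xi}{\xi < \lambda}$ applied to an auxiliary function read off from $A$ and the relevant $e_\eta$'s, in the manner of the functions $h_0, h_1$ in the proof of Theorem~\ref{theorem : quadruples and scales} --- to be a $\lambda i + \gamma$ with $i \in I$, $\gamma \in e_{\eta'} \subseteq A_i$ and a $\lambda j + \beta$ with $j \in I$, $\beta \in e_{\eta} \subseteq A_j$. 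Then $\{\lambda i + \gamma, \lambda j + \beta\}$ is an edge of $G$ contained in $A$, contradicting independence.

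The main obstacle is precisely this coordination: one must honour all the demands while never being forced to place two adjacent vertices into a common neighbourhood (so that the invariant, hence triangle-freeness, survives), and one must schedule the demands so that the lower endpoint of the demand relevant to a given $A$ can be pinned inside the prescribed set $A_i$ using only an \emph{unbounded} scale rather than a dominating one. Here $\stick(\kappa) = \kappa^+$ does the decisive work: it replaces the target in each block, an arbitrary member of $[\lambda]^\lambda$, by one of only $\lambda$ prescribed sets $e_\eta$, so that the recursion can prepare for all possibilities in advance; $\mathfrak{b}_\kappa = \kappa^+$ then supplies the length-$\kappa^+$ scale that performs the final matching between blocks. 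Once this bookkeeping is set up correctly the triangle-free part is immediate from the invariant and the no-large-independent-set part is the domination-style argument above; the full details are carried out in \cite{017LW0}.
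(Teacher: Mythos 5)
The paper does not actually prove this theorem; it is quoted from \cite{017LW0}, so the only internal point of comparison is the fully worked $4$-clique analogue, Theorem \ref{theorem : quadruples, the unbounding number and stick}. Your outline correctly identifies the strategy of the cited argument: a triangle-free graph on $\lambda\kappa$ whose edges cross the $\kappa$ blocks $\roin{\lambda i}{\lambda(i+1)}$, triangle-freeness enforced by keeping each vertex's assigned neighbourhood independent, $\stick(\kappa)=\kappa^+$ reducing the targets inside each block to $\lambda$ prescribed sets $e_\eta$, and a $\leqslant^*$-increasing unbounded family of length $\kappa^+$ doing the final matching. The verification skeleton is also sound: the set $I$ of blocks meeting $A$ in a set of type $\lambda$ must be cofinal in $\kappa$, and since the family is $\leqslant^*$-increasing, a whole tail of indices works for any given auxiliary function, so a suitable index can be found inside a prescribed $\lambda$-sized set.

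As a proof, however, the write-up has a genuine gap, and it sits exactly where the cited paper does its work. Two points are asserted rather than established. First, the scheduling: for a fixed upper block $j$ and stick set $e_\eta$ there are $\lambda$ many demands (one per lower block and lower stick index $\eta'$), but $e_\eta$ has only $\kappa$ vertices, each permitted to meet fewer than $\kappa$ demands, so at most $\kappa$ demands can be honoured there; which $\kappa$-sized subfamily is honoured at which vertex, and why the demand relevant to an arbitrary independent $A$ is among those honoured, is precisely the content of the scale-driven bookkeeping you defer to \cite{017LW0}. Second, the invariant: when a new neighbour $\gamma\in e_{\eta'}$ is chosen for $\lambda j+\beta$, the forbidden set is the union, over the fewer than $\kappa$ already-assigned neighbours $w$, of the block-$i$ neighbourhoods of $w$; your bound controls only the edges that $w$ itself chose, not $w$'s passive degree, i.e.\ the possibly $\kappa$-many earlier-processed block-$i$ vertices that chose $w$. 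Without explicit exclusion sets (the analogues of $\Xi_{\beta,\alpha,\zeta,\nu}$ in the proof of Theorem \ref{theorem : quadruples, the unbounding number and stick}) and the structural constraints that keep them of size $<\kappa$, the recursion as described can get stuck. So the approach is the right one, but what you have is an outline of the cited proof rather than a proof.
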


\begin{corollary}
If $\kappa$ is regular, $\lambda = \kappa^+ = \mathfrak{b}_\kappa = \stick(\kappa)$, then $\alpha \doesntarrow (\lambda\kappa, 3)^2$ for all $\alpha < \lambda^2$.
\end{corollary}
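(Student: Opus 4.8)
The plan is to deduce this from the preceding theorem by a monotonicity reduction and then to appeal to the fact that the colouring constructed in \cite{017LW0} does not really depend on its ground set being exactly $\lambda\kappa$.

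First, $\doesntarrow$ is monotone in its left entry: if $\alpha'\le\alpha$ and $c$ is a colouring of $[\alpha]^2$ witnessing $\alpha\doesntarrow(\beta_0,\beta_1)^2$, then $c\upharpoonright[\alpha']^2$ witnesses $\alpha'\doesntarrow(\beta_0,\beta_1)^2$, because a homogeneous set sitting inside $\alpha'$ is in particular one sitting inside $\alpha$. As $\lambda$ is a limit ordinal, $\lambda^2=\sup_{\gamma<\lambda}\lambda\gamma$, so every $\alpha<\lambda^2$ satisfies $\alpha<\lambda\gamma$ for some $\gamma<\lambda$; hence it is enough to prove $\lambda\gamma\doesntarrow(\lambda\kappa,3)^2$ for every $\gamma<\lambda$. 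For $\gamma\le\kappa$ we have $\lambda\gamma\le\lambda\kappa$, so this follows at once from the theorem and monotonicity. There remains the case $\kappa<\gamma<\lambda$, in which $\lambda\gamma$ is an ordinal of cardinality $\lambda$ strictly between $\lambda\kappa$ and $\lambda^2$.

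For such $\gamma$ I would re-run the construction establishing the preceding theorem, now on the ordinal $\lambda\gamma$ in place of $\lambda\kappa$. Its two inputs, a scale of length $\lambda$ in $\soafft{\kappa}{\kappa}$ (available because $\mathfrak b_\kappa=\kappa^+=\lambda$) and a $\stick(\kappa)$-sequence (available because $\stick(\kappa)=\kappa^+=\lambda$), depend only on $\kappa$ and $\lambda$ and not on which ordinal of cardinality $\lambda$ is being coloured; and the bookkeeping in the construction, which organises its ground set as a concatenation of fewer than $\lambda$ intervals of order type $\lambda$, treats $\lambda\gamma=\lambda\cdot\gamma$ exactly as it treats $\lambda\kappa=\lambda\cdot\kappa$. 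This yields a colouring of $[\lambda\gamma]^2$ with no triangle in colour $1$; what must then be shown is that it has no colour-$0$-homogeneous set of order type $\lambda\kappa$.

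Verifying this diagonalisation in the slightly more general setting is the step I expect to be the main obstacle. Suppose $H\subseteq\lambda\gamma$ were colour-$0$-homogeneous of order type $\lambda\kappa$, and write $\lambda\gamma$ as the concatenation of intervals $\seq{J_i}{i<\gamma}$ of order type $\lambda$. Since $\lambda$ is regular and $\card{\gamma}\le\kappa<\lambda$, the contributions $\otp(H\cap J_i)<\lambda$ of the intervals that $H$ fails to fill are absorbed in the ordinal sum $\otp(H)=\sum_{i<\gamma}\otp(H\cap J_i)$, so that $\otp(H)=\lambda\cdot\otp(F)+c$ for $F\dfeq\soast{i<\gamma}{\otp(H\cap J_i)=\lambda}$ and some $c<\lambda$; as $\otp(H)=\lambda\cdot\kappa$ this forces $\otp(F)=\kappa$. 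Passing to $H\cap\bigcup_{i\in F}J_i$, which is still colour-$0$-homogeneous and has order type $\lambda\kappa$, we may assume $H$ meets exactly $\kappa$ of the intervals and fills each one. This is precisely the configuration the construction of \cite{017LW0} is built to rule out on $\lambda\kappa$: the $\stick(\kappa)$-sequence guesses a suitable $\kappa$-element subset of $H$, the scale then supplies a function eventually dominating the one enumerating $H$ through its $\kappa$ full intervals, and this produces a pair of colour $1$ inside $H$, a contradiction. The delicate point is that in \cite{017LW0} the $\kappa$ relevant intervals are all the intervals of $\lambda\kappa$, whereas here they form an arbitrary $\kappa$-indexed subfamily of the $\gamma$ intervals of $\lambda\gamma$; one must confirm that the construction's bookkeeping anticipates every such subfamily, which is where the exact form of the argument in \cite{017LW0} has to be consulted.
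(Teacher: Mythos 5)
The paper records this as an immediate corollary and offers no proof, so there is nothing to compare you against line by line; judged on its own, your reduction is half right. The monotonicity step and the canonical-form analysis (every $0$-homogeneous $H\subseteq\lambda\gamma$ of type $\lambda\kappa$, $\gamma<\lambda$, contains an $H'=\bigcup_{i\in F}(H\cap J_i)$ with $\otp(F)=\kappa$ and each piece of type $\lambda$) are correct and are the right first move. The gap is exactly where you locate it yourself: for the remaining step you propose to reopen the construction of \cite{017LW0} and ``confirm that the construction's bookkeeping anticipates every such subfamily.'' As written this is a declaration of intent, not an argument, and it makes the corollary depend on internals of the cited proof rather than on its statement.

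The point you are missing is that the gap closes softly, which is presumably why the paper states this without proof. Let $c:[\lambda\kappa]^2\to 2$ witness $\lambda\kappa\doesntarrow(\lambda\kappa,3)^2$, fix $\kappa<\gamma<\lambda$ and an injection $e:\gamma\to\kappa$ (possible since $\card{\gamma}\leqslant\kappa$), and define the injection $\tilde\pi:\lambda\gamma\to\lambda\kappa$ by $\tilde\pi(\lambda i+\xi)=\lambda e(i)+\xi$ for $i<\gamma$ and $\xi<\lambda$. Colour $[\lambda\gamma]^2$ by $d(\{x,y\})\dfeq c(\{\tilde\pi(x),\tilde\pi(y)\})$. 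A $d$-triangle of colour $1$ pushes forward to a $c$-triangle of colour $1$, since $\tilde\pi$ is injective. If $H$ were $0$-homogeneous of type $\lambda\kappa$, pass to your canonical $H'$; then $\tilde\pi[H']$ is a union, over $e[F]$, of type-$\lambda$ subsets of the corresponding intervals of $\lambda\kappa$, and $e[F]$ is a subset of $\kappa$ of cardinality, hence order type, $\kappa$. Thus $\tilde\pi[H']$ has order type $\lambda\cdot\kappa$ and every pair from it is the image of a pair from $H'$, so it is $0$-homogeneous for $c$ --- a contradiction. Note $\tilde\pi$ is not order-preserving (it permutes the intervals), but only the order type and homogeneity of the image matter. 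This disposes of the case $\kappa<\gamma<\lambda$, and hence of all $\alpha<\lambda^2$, using nothing about \cite{017LW0} beyond the statement of the theorem.
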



\begin{theorem}
\label{theorem : quadruples, the unbounding number and stick}
Suppose that $\kappa$ is an infinite regular cardinal and $\lambda = \kappa^+ = \mathfrak{b}_\kappa = \stick(\kappa)$. Then $\lambda^2 \doesntarrow (\lambda\kappa, 4)^2$.
\end{theorem}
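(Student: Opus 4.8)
The strategy is to mimic the proof of Theorem~\ref{theorem : quadruples and scales} (the $\mathfrak{d}_\kappa = \kappa^+$ version), but to compensate for having only an \emph{unbounded} family rather than a genuine scale by using $\stick(\kappa) = \kappa^+$ to do the guessing. Recall that in the scale proof, at the top level we ran over $\beta < \lambda$ and built the neighbourhoods $\Gamma(\lambda\alpha + \beta)$; the crucial point where $\mathfrak{d}_\kappa = \kappa^+$ entered was the verification that a putative $0$-homogeneous set $H$ of order type $\lambda\kappa$ fails: there we chose a single $\beta$ whose associated function $d_\beta$ eventually \emph{dominates} two functions $h_0, h_1$ read off from $H$. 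With $\mathfrak{b}_\kappa = \lambda$ we can only find, for any single target function, \emph{unboundedly many} $\beta$ with $d_\beta$ not dominated by it; this is not literally enough, so the $\stick(\kappa)$ hypothesis must be brought in to pin down the relevant $\beta$ (and the relevant column $\alpha \in A$) simultaneously, exactly in the spirit of how Takahashi's method and the Lambie--Hanson--Weinert argument for the triple version (the theorem cited just above) combine ``diagonalize against all guesses'' with ``guess a small piece of a homogeneous set.''

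Concretely, I would fix a $\stick(\kappa)$-sequence $\seq{c_\xi}{\xi < \lambda}$ of subsets of $\lambda$ of order type $\kappa$ (closing off as in Theorem~\ref{stickdushnikmiller} so that $c_\xi \subseteq \xi$ and the sequence still witnesses $\stick(\kappa) = \kappa^+$), and for each $\xi < \lambda$ fix an unbounded family $\soast{d^\xi_\eta}{\eta < \kappa}$ of functions in $\soafft{\kappa}{\kappa}$ witnessing $\mathfrak{b}_\kappa = \kappa^+$ from below in the appropriate local sense, together with injections $b_\xi$, pairing functions $g_\xi$, etc., as in the scale proof. The graph $\Delta$ on $\lambda^2$ is then built by the same three-layered induction, with the role of $d_\beta$ played by functions indexed so that the $\stick$-guessing sets $c_\beta$ tell us \emph{which} local unbounded family and which member of it to read. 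The verification that there is no $Q \in [\lambda^2]^4$ with $[Q]^2 \subseteq \Delta$ should go through essentially verbatim from Theorem~\ref{theorem : quadruples and scales}, since that half of the argument used only the bookkeeping properties (\eqref{condition : increasing}, \eqref{condition : ascending}--\eqref{condition : greater than gamma-indices}, \eqref{assignment : gammas}--\eqref{assignment : going right}) and not the scale/dominating property.

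For the other half, assume $H \in [\lambda^2]^{\lambda\kappa}$ is $0$-homogeneous (i.e.\ $[H]^2 \cap \Delta = \emptyset$), and extract $A = \soast{\mu < \lambda}{\card{\soast{\nu}{\lambda\mu + \nu \in H}} = \lambda} \in [\lambda]^\kappa$ and the associated $\xi = \sup A$, $\tau$, $\alpha = \min A$ as before. The new ingredient: instead of finding one $\beta$ whose $d_\beta$ dominates $h_0$ and $h_1$, one uses the fact that some $c_\beta$ is a subset of a suitable large set built from $A$ (here is where $\stick(\kappa)$ must guess an unbounded-in-$\kappa$ piece of the column-trace of $H$), and within the local unbounded family $\soast{d^\xi_\eta}{\eta<\kappa}$ one selects, coordinate by coordinate along $c_\beta$, indices $\eta$ that escape the finitely/boundedly many constraints imposed so far — this is precisely the diagonal-against-all-guesses move, and it works because at each of the $<\kappa$ stages only $<\kappa$ constraints are active and the family is unbounded. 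Running the same chain of inequalities \eqref{condition : greater than gammas}--\eqref{condition : greater than gamma-indices}, \eqref{assignment : gammas}, \eqref{assignment : going right} then forces $\lambda\gamma_{\zeta,\nu} + \delta_{\zeta,\nu,\psi} \in H \cap \Gamma(\lambda\alpha+\beta) \subseteq \Delta$ with $\lambda\alpha+\beta \in H$, contradicting $0$-homogeneity.

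\textbf{Main obstacle.} The hard part is arranging the indexing so that the single $\stick(\kappa)$-guess $c_\beta$ simultaneously (i) identifies a column $\gamma_{\zeta,\nu} \in A$, as in the $h_0$-step of the scale proof, and (ii) supplies, through the \emph{unbounded} (not dominating) family, a function that beats the row-function $h_1$ along the relevant initial segment — since an unbounded family only guarantees escape ``on an unbounded set'' rather than ``eventually,'' one must phrase $h_1$ and the induction so that escaping on an unbounded set is all that is used (this is exactly the adaptation already carried out in the cited $(\lambda\kappa,3)^2$ theorem of \cite{017LW0}, which should be followed closely). Getting the quantifier order in the definitions of $h_0, h_1$ and in the choice of $\zeta$ (with $g_\beta(\zeta) = \opair{\xi}{\tau}$ and $\rho_\zeta \geqslant \zeta$) compatible with ``unbounded rather than dominating'' is where all the care goes; everything else is bookkeeping inherited from Theorem~\ref{theorem : quadruples and scales}.
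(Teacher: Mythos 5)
Your overall strategy---keep the clique-freeness bookkeeping from Theorem~\ref{theorem : quadruples and scales} and repair the second half by combining $\stick(\kappa)$-guessing with escape from an unbounded family, as in \cite{017LW0}---is the right one, and your ``main obstacle'' paragraph correctly isolates where the difficulty lies. But the one concrete mechanism you propose for overcoming that obstacle does not work, and the step you defer to ``following \cite{017LW0} closely'' is exactly the content of the theorem. You suggest fixing, for each $\xi<\lambda$, a ``local unbounded family'' $\soast{d^\xi_\eta}{\eta<\kappa}$ and then ``selecting, coordinate by coordinate along $c_\beta$, indices $\eta$ that escape the boundedly many constraints.'' A family of size $\kappa$ in $\soafft{\kappa}{\kappa}$ cannot be unbounded when $\mathfrak{b}_\kappa=\kappa^+$, so there is nothing suitable to select from; and no amount of diagonalizing inside a bounded family recovers the inequalities $\sigma_\zeta=d_\beta(\rho_\zeta)>h_0(\rho_\zeta)$ and $\varphi_{\zeta,\nu}+\iota_{\zeta,\nu}>h_1(\varphi_{\zeta,\nu})$ that the scale proof needs at the specific stage $\zeta$ with $g_\beta(\zeta)=\opair{\xi}{\tau}$.

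What the paper actually does differs in both halves. It fixes a single $\leqslant^*$-increasing unbounded family $\seq{u_\xi}{\xi<\lambda}$ together with a $\stick(\kappa)$-sequence $\seq{s_\xi}{\xi<\lambda}$, and moves the guessing into the \emph{construction}: the neighbourhood of $\lambda\alpha+\beta$ at a column $\gamma=\gamma_{\beta,\alpha,\zeta,\nu}$ consists of the elements $\min(s_\xi\setminus\Xi_{\beta,\alpha,\zeta,\nu})$ for every $\xi$ whose $b$-code is beaten by $u_\beta$ applied to the $b$-code of $\gamma$. In the verification one composes the two auxiliary functions into a single $h=g\circ f$ and needs only \emph{one} point $\psi$ with $u_\beta(\psi)>h(\psi)$; unboundedness plus $\leqslant^*$-increasingness guarantee that a tail of $\beta$'s has this property, and one of them lies in the relevant fibre of $H$, while the stick sets (not a dominating function) guarantee that the planted element $\delta\in s_{b_\tau^{-1}(\sigma)}\setminus\Xi_{\beta,\alpha,\zeta,\nu}$ lands in $H$. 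Moreover, the clique-freeness half does \emph{not} go through verbatim as you claim: since $\sigma_\zeta$ can no longer be taken as the value of a dominating function, the conditions \eqref{condition : greater than gammas}--\eqref{condition : greater than gamma-indices} are replaced by an extra inductive layer of closure sets $\Phi'_{\beta,\alpha,\zeta,n}$ whose smallness needs its own argument (a descending sequence of ordinals when $\kappa=\omega$). So your proposal is a correct road map, but the two load-bearing steps---reformulating the construction so that a single escape point of $u_\beta$ suffices, and reworking the bookkeeping that excludes $4$-cliques---are missing.
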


\begin{proof}
As in the proof of Theorem \ref{theorem : quadruples and scales}, we begin by fixing
\begin{align}
\text{A bijection } b_\xi & : \xi \longleftrightarrow \kappa, \\
\text{an increasing function } u_\xi & : \kappa \longrightarrow \kappa, \\
\text{a set } s_\xi \in [\lambda]^\kappa.
\end{align}
More specifically we do this in a way such that
\begin{align}
\soast{u_\xi}{\xi < \lambda} & \text{ is an unbounded family of functions in } \soafft{\kappa}{\kappa} \text{ such that} \\
& \forall \xi < \lambda\forall \nu < \xi(u_\nu \leqslant^* u_\xi), \\
\soast{s_\xi}{\xi < \lambda} & \text{ witnesses } \stick(\kappa) \text{, \ie\ } \forall X \in [\lambda]^\lambda \exists \xi < \lambda(s_\xi \subseteq X).
\end{align}
Again, as in the proof of Theorem \ref{theorem : quadruples and scales}, we are going to define a graph $\opair{\lambda^2}{\Psi}$. For ordinals $\alpha, \beta, \gamma, \delta < \lambda$ we will have $\{\lambda\alpha + \beta, \lambda\gamma + \delta\} \in \Psi$ \ifff $\lambda\alpha + \beta \in \Lambda_{\delta, \gamma}$ or $\lambda\gamma + \delta \in \Lambda_{\beta, \alpha}$. Moreover $\lambda\alpha + \beta \in \Lambda_{\delta, \gamma}$ implies $\gamma < \alpha < \beta < \delta$. We also, for every $\gamma \in \beta \setminus (\alpha + 1)$, let $\Delta_{\beta, \alpha, \gamma} \dfeq \soast{\delta \in \beta \setminus \alpha}{\lambda\gamma + \delta \in \Lambda_{\beta, \alpha}}$. In other words, we have
\begin{align}
\Psi = \bigcup_{\beta < \lambda} \bigsoast{\{\lambda\alpha + \beta\} \cup \{\xi\}}{\alpha < \beta \wedge \xi \in \Lambda_{\beta, \alpha}} \\
\text{and } \forall \beta < \lambda \forall \alpha < \beta : \Lambda_{\beta, \alpha} = \bigcup_{\gamma < \beta} \soast{\lambda\gamma + \xi}{\xi \in \Delta_{\beta, \alpha, \gamma}}.
\end{align}
The analogy continues in that $\Psi$ is defined in a four-layered induction. The first layer has length $\lambda$ and in step $\beta < \lambda$ we define $\Lambda_{\beta, \alpha}$ for every $\alpha < \beta$. The second layer has length $\beta$ and in step $\alpha < \beta$ we define $\Lambda_{\beta, \alpha}$. The third layer has length $\kappa$, determining in step $\zeta$ the sets $\Lambda_{\beta, \alpha, \gamma}$ for all $\gamma \in \Gamma_{\beta, \alpha, \zeta}$ where $\Gamma_{\beta, \alpha, \zeta} \in [b_\beta^{-1}(\zeta) + 1\setminus (\beta + 1)]^{<\kappa}$. If $b_\beta(\zeta) \in \bigcup_{\xi < \zeta} \Gamma_{\beta, \alpha, \xi}$, then $\Gamma_{\beta, \alpha, \zeta} \dfeq 0$, otherwise we define the sequence $\seq{\Phi'_{\beta, \alpha, \zeta, n}}{n < \omega}$ inductively by $\Phi'_{\beta, \alpha, \zeta, 0} \dfeq \{b_\beta(\zeta)\}$ and
\begin{align}
\label{b & stick : assignment : Phi's}
\Phi'_{\beta, \alpha, \zeta, n} \dfeq & \bigsoast{\gamma \in \beta \setminus (\alpha + 1)}{\exists \mu \in \bigcup_{\xi < \zeta} \Gamma_{\beta, \alpha, \xi} \cup \bigcup_{k < n} \Phi'_{\beta, \alpha, \zeta, k} \setminus \gamma \big(b_\mu(\gamma) < b_\mu(\alpha) \\
& \mspace{80mu} \vee \exists \rho \in \bigcup_{\xi < \zeta} \Gamma_{\beta, \alpha, \xi} \cup \bigcup_{k < n} \Phi'_{\beta, \alpha, \zeta, k} \cap \mu \setminus \gamma(b_\mu(\gamma) < b_\mu(\rho)\big)}, \\
\text{moreover we set} \notag\\
\Phi_{\beta, \alpha, \zeta} \dfeq & \bigcup_{n < \omega} \Phi'_{\beta, \alpha, \zeta, n} \text{, and} \\
\Gamma_{\beta, \alpha, \zeta} \dfeq & \Phi_{\beta, \alpha, \zeta} \setminus \bigcup_{\xi < \zeta} \Gamma_{\beta, \alpha, \xi}.
\end{align}
Note that
\begin{align}
\label{b & stick : assertion : size of Theta's}
\card{\Phi_{\beta, \alpha, \zeta}} < \kappa.
\end{align}
First of all, it is easy to prove by induction that for all $n < \omega$ the set $\Phi'_{\beta, \alpha, \zeta, n}$ has cardinality less than $\kappa$. It is comparably easy to see that the natural numbers $k$ for which $\Phi'_{\beta, \alpha, \zeta, k}$ is nonempty, form an initial segment of $\omega$.  If $\kappa$ is uncountable, then, as $\kappa$ is regular, clearly $\card{\Phi_{\beta, \alpha, \zeta}} < \kappa$. So assume towards a contradiction that \eqref{b & stick : assertion : size of Theta's} fails. We have $\kappa = \omega$ and for every natural number $k$ the set $\Phi'_{\beta, \alpha, \zeta, k}$ is finite. But then, by \eqref{b & stick : assignment : Phi's}, the sequence $\seq{\max(\Phi'_{\beta, \alpha, \zeta, k + 1} \setminus \bigcup_{m \leqslant k} \Phi'_{\beta, \alpha, \zeta, m})}{k < \omega}$ is a descending one of ordinals, a contradiction. Therefore \eqref{b & stick : assertion : size of Theta's}.

Now let $\seq{\gamma_{\beta, \alpha, \zeta, \nu}}{\nu < \vartheta_{\beta, \alpha, \zeta}}$ be the increasing enumeration of $\Gamma_{\beta, \alpha, \zeta}$. Now inductively for every $\nu < \vartheta_{\beta, \alpha, \zeta}$, let
\begin{align}
\label{b & stick : assignment : Xi's}
\Xi_{\beta, \alpha, \zeta, \nu} & \dfeq \bigcup\bigsoast{\Delta_{\delta, \gamma_{\beta, \alpha, \xi, \mu}, \gamma_{\beta, \alpha, \zeta, \nu}}}{\big((\xi < \zeta \wedge \mu < \vartheta_{\beta, \alpha, \xi}) \vee (\xi = \zeta \wedge \mu < \nu)\big) \wedge \gamma_{\beta, \alpha, \xi, \mu} < \gamma_{\beta, \alpha, \zeta, \nu} \wedge \delta \in \Delta_{\beta, \alpha, \gamma_{\beta, \alpha, \xi, \mu}}} \\
\text{and} & \notag\\
\label{b & stick : assignment : Delta's}
\Delta_{\beta, \alpha, \gamma_{\beta, \alpha, \zeta, \nu}} & \dfeq \Bigsoast{\min\big(s_\xi \setminus \Xi_{\beta, \alpha, \zeta, \nu})}{\xi < \beta \wedge s_\xi \subseteq \beta \wedge \exists \rho \leqslant \zeta\big(b_{b_\beta^{-1}(\rho)}(\xi) < u_\beta(b_{b_\beta^{-1}(\rho)}(\gamma_{\beta, \alpha, \zeta, \nu}))\big)}.
\end{align}

This finishes the induction. Now we set $\Lambda_{\beta, \alpha} \dfeq \soast{\lambda\gamma_{\beta, \alpha, \zeta, \nu} + \delta}{\zeta < \kappa \wedge \nu < \vartheta_\zeta \wedge \delta \in \Delta_{\beta, \alpha, \gamma_{\beta, \alpha, \zeta, \nu}}}$ which finishes the definition of $\Psi$.

Now we are going to check that this provides what was demanded. So let $Q \in [\lambda^2]^4$. Let $Q = \soast{\lambda \alpha_k + \beta_k}{k \in 5 \setminus 1}$.
Without loss of generality we have $\alpha_1 < \alpha_2 < \alpha_3 < \alpha_4$. Assume towards a contradiction, that $[Q]^2 \subseteq \Psi$. It follows that
\begin{align}
\alpha_1 < \alpha_2 < \alpha_3 < \alpha_4 < \beta_4 < \beta_3 < \beta_2 < \beta_1 \text{ and } \soast{\lambda\alpha_k + \beta_k}{k \in 5 \setminus 2} \in [ \Lambda_{\beta_1, \alpha_1}]^3.
\end{align}

For $i \in \{1, 2\}$ and $j \in 5 \setminus (i + 1)$, let $\zeta_{ij}$ be the induction step in which $\lambda\alpha_j + \beta_j$ was added to $\Lambda_{\beta_i, \alpha_i}$ and let $\nu_{ij} < \vartheta_{\zeta_{ij}}$ be such that $\alpha_j = \gamma_{\beta_i, \alpha_i, \zeta_{ij}, \nu_{ij}}$.
Now we distinguish two cases:

First assume that there is an $i \in \{1, 2\}$ and a $k \in 4 \setminus (i + 1)$ such that $\zeta_{ik} \leqslant \zeta_{i(k + 1)}$. Then, either $\zeta_{ik} < \zeta_{i(k + 1)}$ or both $\zeta_{ik} = \zeta_{i(k + 1)}$ and $\nu_{ik} < \nu_{i(k + 1)}$.
As $\{\lambda\alpha_i + \beta_i, \lambda\alpha_k + \beta_k\} \in \Psi$ and $\alpha_i < \alpha_k$, we have $\lambda\alpha_k + \beta_k \in \Lambda_{\beta_i, \alpha_i}$. As $\beta_k = \gamma_{\beta_i, \alpha_i, \zeta_{ik}, \nu_{ik}}$, this implies  $\beta_k \in \Delta_{\beta_i, \alpha_i, \alpha_k}$. Since $\alpha_k = \gamma_{\beta_i, \alpha_i, \zeta_{ik}, \nu_{ik}} < \gamma_{\beta_i, \alpha_i, \zeta_{i(k + 1)}, \nu_{i(k + 1)}} = \alpha_{k + 1}$, \eqref{b & stick : assignment : Xi's} implies
\begin{align}
& \Delta_{\beta_k, \alpha_k, \alpha_{k + 1}} \subseteq \Xi_{\alpha_i, \beta_i, \zeta_{i(k + 1)}, \nu_{i(k + 1)}}. \\
\text{As } \{\lambda\alpha_k + \beta_k, \lambda\alpha_{k + 1} + \beta_{k + 1}\} \in \Psi & \text{, we have } \lambda\alpha_{k + 1} + \beta_{k + 1} \in \Lambda_{\beta_k, \alpha_k}. \\
\text{Therefore, } \beta_{k + 1} \in & \Delta_{\beta_k, \alpha_k, \alpha_{k + 1}}.
\end{align}
So $\beta_{k + 1} \in \Xi_{\alpha_i, \beta_i, \zeta_{i(k + 1)}, \nu_{i(k + 1)}}$ and by \eqref{b & stick : assignment : Xi's}, we get $\beta_{k + 1} \notin \Delta_{\beta_i, \alpha_i, \alpha_{k + 1}}$. This, however, implies $\lambda\alpha_{k + 1} + \beta_{k + 1} \notin \Lambda_{\beta_i, \alpha_i}$ which means that $\{\lambda\alpha_i + \beta_i, \lambda\alpha_{k + 1} + \beta_{k + 1}\} \in [Q]^2 \setminus \Psi$, a contradiction.

Now assume that this is not the case, \ie\ $\zeta_4 < \zeta_3 < \zeta_2$ and $\zeta_8 < \zeta_6$. Let $\ell$ be the least natural number for which $\alpha_3 \in \Phi'_{\beta_1, \alpha_1, \zeta_3, \ell}$. Then,
\begin{align}
\alpha_2 \notin \bigcup_{\zeta < \zeta_2} \Gamma_{\beta_1, \alpha_1, \zeta} \supseteq \bigcup_{\zeta \leqslant \zeta_3} \Gamma_{\beta_1, \alpha_1, \zeta} \supseteq \Phi_{\beta_1, \alpha_1, \zeta_3} \supseteq \Phi'_{\beta_1, \alpha_1, \zeta_3, \ell + 1}, \\
\text{so by } \eqref{b & stick : assignment : Phi's}, \notag\\
\forall \mu \in \bigcup_{\xi < \zeta_3} \Gamma_{\beta_1, \alpha_1, \xi} \cup \bigcup_{k \leqslant \ell} \Phi'_{\beta_1, \alpha_1, \zeta_3, k} \setminus \alpha_2\big( b_\mu(\alpha_1) < b_\mu(\alpha_2) \wedge (\forall \rho \in \bigcup_{\xi < \zeta_3} \Gamma_{\beta_1, \alpha_1, \xi} \cup \bigcup_{k \leqslant \ell} \Phi'_{\beta_1, \alpha_1, \zeta_3, k} \cap \mu \setminus \alpha_2(b_\mu(\rho) \leqslant b_\mu(\alpha_2))\big).
\end{align}
As $b_{\alpha_4}$ is injective, $\alpha_3 \in \Gamma_{\beta_1, \alpha_1, \zeta_4}$ and $\alpha_3 \in \Phi'_{\beta_1, \alpha_1, \zeta_3, \ell}$, we have
\begin{align}
\label{b & stick : inequality : bijections}
b_{\alpha_4}(\alpha_3) < b_{\alpha_4}(\alpha_2).
\end{align}

Now let $m$ be the least natural number such that $\alpha_4 \in \Phi'_{\beta_2, \alpha_2, \zeta_8, m}$. Then,
\begin{align}
\alpha_3 \notin \bigcup_{\zeta < \zeta_6} \Gamma_{\beta_2, \alpha_2, \zeta} \supseteq \bigcup_{\zeta \leqslant \zeta_8} \Gamma_{\beta_2, \alpha_2, \zeta} \supseteq \Phi_{\beta_2, \alpha_2, \zeta_8} \supseteq \Phi'_{\beta_2, \alpha_2, \zeta_8, m + 1}, \\
\text{so by } \eqref{b & stick : assignment : Phi's}, \notag\\
\forall \mu \in \bigcup_{\xi < \zeta_8} \Gamma_{\beta_2, \alpha_2, \xi} \cup \bigcup_{k \leqslant m} \Phi'_{\beta_2, \alpha_2, \zeta_8, k} \setminus \alpha_3\big( b_\mu(\alpha_2) < b_\mu(\alpha_3) \wedge (\forall \rho \in \bigcup_{\xi < \zeta_8} \Gamma_{\beta_2, \alpha_2, \xi} \cup \bigcup_{k \leqslant m} \Phi'_{\beta_2, \alpha_2, \zeta_8, k} \cap \mu \setminus \alpha_3(b_\mu(\rho) \leqslant b_\mu(\alpha_3))\big).
\end{align}
As $\alpha_4 \in \Phi'_{\beta_1, \alpha_1, \zeta_8, m} \setminus \alpha_3$, we get $b_{\alpha_4}(\alpha_2) < b_{\alpha_4}(\alpha_3)$, contradicting \eqref{b & stick : inequality : bijections}.

Now let 
$H \in [\lambda^2]^{\lambda\kappa}$. 
We define
\begin{align}
A \dfeq \soast{\mu < \lambda}{\card{H \cap \lambda(\mu + 1) \setminus \lambda\mu} = \lambda}.
\end{align}
Let $\alpha \dfeq \min(A)$ and $\xi \dfeq \sup(A)$ and $\tau \dfeq \min\soast{\mu \in \lambda \setminus \xi}{\forall \nu \in A \exists \rho < \mu \forall \sigma \in s_\rho : \lambda\nu + \sigma \in H}$. Consider the functions
\begin{align}
\label{b & stick : assignment : f}
f : \kappa & \longrightarrow \kappa, \\
\mu & \longmapsto \min\soast{\nu \in \kappa \setminus \mu}{b_\tau^{-1}(\nu) \in A} \text{ and} \\
\label{b & stick : assignment : g}
g : \kappa & \longrightarrow \kappa, \\
& \mu \longmapsto \min\soast{\nu < \kappa}{\forall \rho \leqslant \mu\big(b_\tau^{-1}(\rho) \in A \rightarrow \exists \sigma < \nu \forall \varphi \in s_{b_\tau^{-1}(\sigma)} : \lambda b_\tau^{-1}(\rho) + \varphi \in H\big)}.
\end{align}
Now set $h \dfeq g \circ f$ and let $\beta \in \lambda \setminus \tau$ be such that $\lambda\alpha + \beta \in H$ and $u_\beta$ is unbounded over $h$. Let $\iota \dfeq b_\beta(\tau)$ and $\psi \in \kappa \setminus \max\big(\iota, b_\beta(\alpha) + 1\big)$ be such that $u_\beta(\psi) > h(\psi)$ and let $\gamma \dfeq b_\tau^{-1}\big(\min\soast{\mu \in \kappa \setminus \psi}{b_\tau(\mu) \in A}\big)$. By \eqref{b & stick : assignment : f}, we have $f(\psi) \geqslant b_\tau(\gamma)$. Now by \eqref{b & stick : assignment : g},
there is a $\sigma < h(\psi)$ such that for all $\varphi \in s_{b_\tau^{-1}(\sigma)}$ we have $\lambda \gamma + \varphi \in H$. Let $\zeta < \kappa$ and $\nu < \vartheta_{\beta, \alpha, \zeta}$ be such that $\gamma = \gamma_{\beta, \alpha, \zeta, \nu}$. Then, as $\card{\Xi_{\beta, \alpha, \zeta, \nu}} < \kappa$, there is a $\delta \in s_{b_\tau^{-1}(\sigma)} \setminus \Xi_{\beta, \alpha, \zeta, \nu} \subseteq \Delta_{\beta, \alpha, \gamma}$. Then $\lambda \gamma + \delta \in \Lambda_{\beta, \alpha}$ and, consequently, $\{\lambda \alpha + \beta, \lambda \gamma + \delta\} \in \Psi$.
\end{proof}

\begin{corollary}
If $\kappa$ is regular and $\lambda = \kappa^+ = \mathfrak{b}_\kappa = \stick(\kappa)$, then $\alpha \doesntarrow (\lambda\kappa, 4)^2$ for all $\alpha < \lambda^2\kappa$.
\end{corollary}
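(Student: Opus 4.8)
The plan is to deduce this from Theorem~\ref{theorem : quadruples, the unbounding number and stick} by a routine ``stacking copies'' argument, exactly parallel to how the corollary after Theorem~\ref{theorem : quadruples and scales} should be obtained from that theorem. First I would reduce to the case $\alpha = \lambda^2 \cdot j$ for some $j < \kappa$. Since $\kappa$ is a regular cardinal it is in particular a limit ordinal, so $\lambda^2\kappa = \sup_{j < \kappa} \lambda^2 j$, and hence every $\alpha < \lambda^2\kappa$ is a proper initial segment of some $\lambda^2 \cdot j$ with $j < \kappa$. Moreover, if a colouring of $[\lambda^2 j]^2$ admits no homogeneous set of order type $\lambda\kappa$ in colour $0$ and none of size $4$ in colour $1$, then its restriction to $[\alpha]^2$ inherits this property, because any homogeneous set contained in $\alpha$ is also one contained in $\lambda^2 j$ (order types being preserved under taking initial segments). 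So it suffices to produce, for each $j < \kappa$, a graph on $\lambda^2 \cdot j$ --- equivalently, a $2$-colouring of its pairs, with the ``edge'' colour playing the role of colour $1$ and ``non-edge'' that of colour $0$, as in the proof of Theorem~\ref{theorem : quadruples, the unbounding number and stick} --- with no $K_4$ and no independent set of order type $\lambda\kappa$.

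Next I would fix a graph $\Psi$ on $\lambda^2$ witnessing $\lambda^2 \doesntarrow (\lambda\kappa, 4)^2$, as furnished by Theorem~\ref{theorem : quadruples, the unbounding number and stick}. View $\lambda^2 \cdot j$ as the concatenation of blocks $B_i \dfeq \roin{\lambda^2 i}{\lambda^2(i+1)}$ for $i < j$, each carried to $\lambda^2$ by its canonical order isomorphism. Define a graph $\Psi_j$ on $\lambda^2 \cdot j$ by placing a copy of $\Psi$ inside every block $B_i$ and putting no edge between points of distinct blocks. I claim $\Psi_j$ is as required.

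For the first requirement: a $4$-clique of $\Psi_j$ cannot meet two different blocks, since any cross-block pair is a non-edge; hence it lies inside a single $B_i$, contradicting that $\Psi$ has no $K_4$. For the second: let $I$ be independent in $\Psi_j$. For each $i < j$ the set $I \cap B_i$ is independent in the copy of $\Psi$ on $B_i$, so by Theorem~\ref{theorem : quadruples, the unbounding number and stick} its order type is some $\rho_i < \lambda\kappa$; write $\rho_i = \lambda\xi_i + \delta_i$ with $\xi_i < \kappa$ and $\delta_i < \lambda$, so $\rho_i < \lambda(\xi_i + 1)$. Since the blocks are consecutive, $\otp(I) = \sum_{i < j} \rho_i \leqslant \sum_{i<j}\lambda(\xi_i+1) = \lambda \cdot \sum_{i<j}(\xi_i+1)$ by left-distributivity of ordinal multiplication, and $\sum_{i<j}(\xi_i+1) < \kappa$ because $\kappa$ is regular and we are summing $j < \kappa$ ordinals each below $\kappa$. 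Hence $\otp(I) < \lambda\kappa$, and the corollary follows.

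I do not expect a genuine obstacle here; the only point needing a modicum of care is the ordinal-arithmetic estimate in the last step, together with the observation that it is sharp: with $j = \kappa$ blocks one could in principle assemble an independent set of order type exactly $\lambda\kappa$ (taking a $\lambda$-sized independent piece from each block), which is precisely why the statement is confined to $\alpha < \lambda^2\kappa$ and cannot be pushed to $\alpha = \lambda^2\kappa$ by this method.
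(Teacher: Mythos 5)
Your argument is correct and is exactly the routine block-stacking argument that the paper intends (which is why it states the corollary without proof): reduce to $\alpha = \lambda^2 j$ with $j < \kappa$, lay down disjoint copies of the witnessing graph $\Psi$ on consecutive blocks with no cross-block edges, so that a $K_4$ must live in one block, while an independent set meets each block in a set of order type $< \lambda\kappa$ and the regularity of $\kappa$ bounds the concatenated order type $\sum_{i<j}\rho_i$ below $\lambda\kappa$. All the details you flag, including the ordinal-arithmetic estimate and its sharpness at $\alpha = \lambda^2\kappa$, are handled correctly.
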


\begin{corollary}
If $\mathfrak{b} = \aleph_1 = \stick$ then $\alpha \doesntarrow (\omega_1\omega, 4)^2$ for all $\alpha < \omega_1^2\omega$.
\end{corollary}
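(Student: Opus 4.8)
\textit{Proof proposal.} The plan is to obtain this as the $\kappa=\omega$ instance of the corollary immediately preceding it, namely the statement that for regular $\kappa$ with $\lambda=\kappa^+=\mathfrak{b}_\kappa=\stick(\kappa)$ one has $\alpha\doesntarrow(\lambda\kappa,4)^2$ for all $\alpha<\lambda^2\kappa$. So essentially all that needs checking is that the hypothesis ``$\mathfrak{b}=\aleph_1=\stick$'' is exactly the $\kappa=\omega$ case of the hypothesis of that corollary, and that the two conclusions then coincide verbatim.

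For the hypothesis: $\omega$ is a regular cardinal, so it is an admissible choice of $\kappa$. By the conventions fixed in the introduction, $\mathfrak{b}=\mathfrak{b}_\omega$ and $\stick=\stick(\omega)=\stick(\omega,\omega_1)$, and of course $\omega^+=\omega_1$. Since $\mathfrak{b}\geqslant\aleph_1$ and $\stick\geqslant\aleph_1$ are provable in $\ZFC$ (indeed $\stick(\kappa)>\kappa$ as remarked in the introduction, and $\mathfrak{b}_\omega>\omega$ by the usual diagonalisation), the assumption $\mathfrak{b}=\aleph_1=\stick$ is equivalent to $\omega_1=\omega^+=\mathfrak{b}_\omega=\stick(\omega)$, which is precisely ``$\lambda=\kappa^+=\mathfrak{b}_\kappa=\stick(\kappa)$'' with $\kappa=\omega$ and $\lambda=\omega_1$. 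Hence the preceding corollary applies.

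For the conclusion: with $\kappa=\omega$ and $\lambda=\omega_1$ we have $\lambda\kappa=\omega_1\omega$ and $\lambda^2\kappa=\omega_1^2\omega$, so the preceding corollary delivers exactly $\alpha\doesntarrow(\omega_1\omega,4)^2$ for every $\alpha<\omega_1^2\omega$, which is the assertion to be proved. There is therefore no genuine obstacle at this stage: the substance has already been handled, first in Theorem \ref{theorem : quadruples, the unbounding number and stick}, which produces the relevant $2$-colouring of $[\lambda^2]^2$, and then in the ordinal-arithmetic step that upgrades $\lambda^2\doesntarrow(\lambda\kappa,4)^2$ to $\alpha\doesntarrow(\lambda\kappa,4)^2$ for all $\alpha<\lambda^2\kappa$. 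Neither of those arguments uses anything particular about $\omega$, so the ``hard part'' lies entirely upstream and specialisation is all that remains; alternatively one could rerun the proof of Theorem \ref{theorem : quadruples, the unbounding number and stick} verbatim with $\kappa=\omega$ and then perform the same stepping-up, but quoting the corollary is more economical.
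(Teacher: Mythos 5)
Your proposal is correct and matches the paper's intent exactly: the corollary is stated without proof precisely because it is the $\kappa=\omega$ instance of the immediately preceding corollary (itself derived from Theorem \ref{theorem : quadruples, the unbounding number and stick}), and your verification of the notational conventions ($\mathfrak{b}=\mathfrak{b}_\omega$, $\stick=\stick(\aleph_0)=\stick(\aleph_0,\aleph_1)$, $\omega^+=\omega_1$) together with the $\ZFC$ facts $\mathfrak{b}>\aleph_0$ and $\stick>\aleph_0$ is all that is needed. Nothing further is required.
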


\section{Baire category and a col\ou ring of Shelah}

Answering a question of Erd\H{o}s and Hajnal, Shelah \cite{975S2} proved that $\CH$ implies that there is a col\ou ring proving $\omega_1\doesntarrow[\omega_1]^2_\omega$ with no triangle with edges of three different col\ou rs. Using a similar col\ou ring, we reduce this hypothesis to the existence of a Luzin set, i.e., an uncountable subset of $\soafft{\omega}{2}$ so that each of its uncountable subsets is nonmeag\re.

Abstractly, some Luzin-type properties are relevant to proving negative partition relations, since they capture properties of sets that are inherited by all large subsets. In fact, this is the reason that assumptions like $\mathfrak{b}=\aleph_1$ or the existence of a scale are so prominent in the theorems in this paper.

Let us place Luzin sets in the picture of cardinal characteristics. The existence of a Luzin set follows from $\CH$, \cf\ \cite{914L0} (and in fact from $\cof(\meagre)=\aleph_1$). Judah and Shelah proved that it is consistent that $\non(\meagre)=\aleph_1$ and there are no Luzin sets, \cf\ \cite{994JS0}. The existence of a Luzin set does not entail $\mathfrak{d}=\aleph_1$: starting from a model of $\CH$, add $\aleph_2$-many Cohen reals. Each uncountable subset of the Cohen reals is nonmeag\re, since it cannot be a subset of any Borel meag\re\ set (which lives in an intermediate extension of just countably many of the Cohen factors). However, $\mathfrak{d}=\aleph_2$ in this model.

\begin{theorem}
Suppose there exists a Luzin set. Then there is a col\ou ring proving $\omega_1\doesntarrow[\omega_1]^2_\omega$ with no triangle with edges of three different col\ou rs.
\end{theorem}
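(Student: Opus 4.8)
The plan is to use the col\ou ring from Shelah's $\CH$ argument, now fed by the Luzin set. Fix a Luzin set $L = \soast{x_\alpha}{\alpha < \omega_1} \subseteq \soafft{\omega}{2}$ with the $x_\alpha$ pairwise distinct, write $\Delta(x,y) \dfeq \min\soast{k < \omega}{x(k) \neq y(k)}$ for distinct $x, y \in \soafft{\omega}{2}$, and fix once and for all a surjection $\sigma : \omega \longrightarrow \omega$ each of whose fibres $\sigma^{-1}(n)$ is infinite. Define $c : [\omega_1]^2 \longrightarrow \omega$ by $c(\{\alpha,\beta\}) \dfeq \sigma\big(\Delta(x_\alpha, x_\beta)\big)$. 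That $c$ admits no triangle with three different col\ou rs is immediate: for distinct $\alpha,\beta,\gamma$, not all three of $\Delta(x_\alpha,x_\beta)$, $\Delta(x_\beta,x_\gamma)$, $\Delta(x_\alpha,x_\gamma)$ are distinct (this is the ultrametric inequality satisfied by $\Delta$), and hence not all three of $c(\{\alpha,\beta\})$, $c(\{\beta,\gamma\})$, $c(\{\alpha,\gamma\})$ are distinct.

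The technical content is to verify $\omega_1 \doesntarrow [\omega_1]^2_\omega$, and for this I would first isolate a Baire-category fact: \emph{if $Y \subseteq \soafft{\omega}{2}$ is not meag\re, then $\soast{\Delta(x,y)}{x, y \in Y \wedge x \neq y}$ is cofinite in $\omega$.} Suppose not; then there is an infinite $S \subseteq \omega$ such that, for every $k \in S$, no two members of $Y$ agree below $k$ and disagree at $k$. So for each $k \in S$ there is $\varepsilon_k : \soafft{k}{2} \longrightarrow 2$ with $Y \subseteq \soast{x}{x(k) = \varepsilon_k(x \upharpoonright k)}$, whence $Y \subseteq [T]$ for the pruned subtree $T \dfeq \soast{t \in \soafft{<\omega}{2}}{\forall k \in S \cap \card{t} : t(k) = \varepsilon_k(t \upharpoonright k)}$. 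Since every node of $T$ whose length lies in $S$ has a unique immediate successor in $T$, a routine computation shows $[T]$ is nowhere dense, so $Y$ is meag\re---a contradiction.

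Granting the fact, the rest is soft. Given an uncountable $A \subseteq \omega_1$, the set $B \dfeq \soast{x_\alpha}{\alpha \in A}$ is an uncountable subset of $L$, hence not meag\re; so $\soast{\Delta(x_\alpha, x_\beta)}{\alpha, \beta \in A \wedge \alpha \neq \beta}$ is cofinite in $\omega$, and since each $\sigma^{-1}(n)$ is infinite it meets this cofinite set---that is, every col\ou r $n$ is assumed by $c$ on some pair from $A$. Thus $c \upharpoonright [A]^2$ takes every value in $\omega$, as required. The one genuinely delicate point, which I expect to be the main obstacle, is getting the col\ou ring right: it is essential to route the first-disagreement level through a surjection with infinite fibres. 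The naive variant $c(\{\alpha,\beta\}) = \Delta(x_\alpha, x_\beta)$ fails outright, since an uncountable subset of $L$ contained in a basic clopen set realises only the col\ou rs above its length; it is exactly the infinite fibres of $\sigma$ that neutralise this obstruction, and this is the same device that underlies Shelah's original col\ou ring under $\CH$.
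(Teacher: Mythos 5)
Your proposal is correct and follows essentially the same route as the paper: the same colouring $\sigma\circ\Delta$ on a Luzin set, the same ultrametric argument for the absence of three-coloured triangles, and the same Baire-category lemma (your cofiniteness of the $\Delta$-spectrum of a nonmeagre set is exactly the paper's claim that for all but finitely many $n$ some pair splits at level $n$, proved by the same nowhere-dense-closure argument, which you phrase via the tree $T$). Your closing remark about the necessity of the infinite-fibre surjection is also exactly the device the paper uses.
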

\begin{proof}
Fix a Luzin set $\langle f_\alpha\mid\alpha<\omega_1\rangle$ of size $\aleph_1$. For $\alpha<\beta<\omega_1$, define $\Delta(\alpha,\beta)$ to be the least $n<\omega$ such that $f_\alpha(n)\neq f_\beta(n)$.

Let $s:\omega\rightarrow\omega$ be a function so that for every $i<\omega$, $s^{-1}[i]$ is infinite. Define the col\ou ring $c:[\omega_1]^2\rightarrow \omega$ to be $c(\alpha,\beta)\dfeq s(\Delta(\alpha,\beta))$.

There are no triangles which get three col\ou rs from $c$.
\begin{claim*}
For distinct $\alpha,\beta,\gamma<\omega_1$, the smaller two among $\Delta(\alpha,\beta),\Delta(\beta,\gamma),\Delta(\gamma,\alpha)$ are equal.
\end{claim*}
\begin{proofofclaim}
Towards a contradiction, suppose $\Delta(\alpha,\gamma)<\Delta(\beta,\gamma)\leqslant D(\alpha,\beta)$. By definition of $\Delta$,
$$f_\alpha(\Delta(\alpha,\gamma))\neq f_\gamma(\Delta(\alpha,\gamma)),$$
but since $\Delta(\alpha,\gamma)<\Delta(\beta,\gamma)$,
$$f_\gamma(\Delta(\alpha,\gamma))=f_\beta(\Delta(\alpha,\gamma)).$$
In total,
$$f_\alpha(\Delta(\alpha,\gamma))\neq f_\beta(\Delta(\alpha,\gamma))$$
but this contradicts $\Delta(\alpha,\gamma)< D(\alpha,\beta)$.
\end{proofofclaim}

Now we show that $c$ satisfies $\omega_1\doesntarrow[\omega_1]^2_\omega$.

\begin{claim*}
If $X\subseteq \soafft{\omega}{2}$ is nonmeag\re, then for all but finitely many $n$, there exists some $t\in \soafft{n}{2}$ and $f,g\in X$ so that $t^\frown \langle 0\rangle\subseteq f$ and $t^\frown \langle 0\rangle\subseteq g$.
\end{claim*}
\begin{proofofclaim}
Suppose not. Then there are infinitely many $n$ so that for every $f\in X$, $f(n)=i_{f\upharpoonright n}$ for some $i_{f\upharpoonright n}$ which depends only on $f\upharpoonright n$. The closure $\mathrm{cl}(X)$ then must also have this property, which implies that the interior of $\mathrm{cl}(X)$ is empty, and hence $X$ is nowhere dense.
\end{proofofclaim}

Suppose $A\subseteq \omega_1$. Then by the Luzin property, $\soast{f_\alpha}{\alpha\in A}$ is nonmeag\re, so there is some $n^*$ large enough so that all $n\geqslant n^*$ satisfy the condition in the last claim. Let $m<\omega$ be arbitrary. Choose $n\geqslant n^*$ so that $s(n)=m$ and $\alpha,\beta\in A$ so that $f_\alpha\upharpoonright n=f_\beta\upharpoonright n$ but $f_\alpha(n)=0$ and $f_\beta(n)=1$. Then $\Delta(\alpha,\beta)=m$, and $c(\alpha,\beta)=s(\Delta(\alpha,\beta))=m$.
\end{proof}

\begin{corollary}
If $\cof(\meagre)=\aleph_1$, then there is a col\ou ring proving $\omega_1\doesntarrow[\omega_1]^2_\omega$ with no triangle with edges of three different col\ou rs.
\end{corollary}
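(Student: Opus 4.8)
The plan is to reduce the Corollary to the preceding theorem by invoking the remark made earlier in this section that the existence of a Luzin set follows already from $\cof(\meagre) = \aleph_1$. Once a Luzin set is available, the col\ou ring produced by the theorem witnesses exactly the desired relation, so all that remains is to spell out why $\cof(\meagre) = \aleph_1$ yields a Luzin set.

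First I would fix a family $\seq{M_\alpha}{\alpha < \omega_1}$ of meag\re\ subsets of $\soafft{\omega}{2}$ that is cofinal under $\subseteq$ in the meag\re\ ideal; such a family exists precisely because $\cof(\meagre) = \aleph_1$. Since every meag\re\ set is contained in a meag\re\ $F_\sigma$ set and a countable union of meag\re\ sets is again meag\re, I may replace $M_\alpha$ by $\bigcup_{\beta \leqslant \alpha} M_\beta$ and thereby assume, without loss of generality, that the sequence is $\subseteq$-increasing with each term still meag\re.

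Next I would recursively choose, for every $\alpha < \omega_1$, a point $f_\alpha \in \soafft{\omega}{2} \setminus \big( M_\alpha \cup \soast{f_\beta}{\beta < \alpha} \big)$. This is always possible because the removed set is the union of a meag\re\ set and a countable set, hence meag\re, whereas $\soafft{\omega}{2}$ is not meag\re\ by the Baire category theorem. Put $L \dfeq \soast{f_\alpha}{\alpha < \omega_1}$; the $f_\alpha$ are pairwise distinct, so $L$ is uncountable, and I claim that $L$ is a Luzin set in the sense used here, i.e.\ that every uncountable subset of $L$ is nonmeag\re. It suffices to verify that $\card{L \cap M} \leqslant \aleph_0$ for every meag\re\ $M$: an uncountable meag\re\ subset $L' \subseteq L$ would contradict this upon taking $M \dfeq L'$, since $L \cap L' = L'$. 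Now, given a meag\re\ $M$, cofinality provides $\gamma < \omega_1$ with $M \subseteq M_\gamma$; for every $\beta \geqslant \gamma$ we have $M \subseteq M_\gamma \subseteq M_\beta$ together with $f_\beta \notin M_\beta$, hence $f_\beta \notin M$. Therefore $L \cap M \subseteq \soast{f_\beta}{\beta < \gamma}$ is countable, as required.

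Finally, feeding the Luzin set $L$ into the preceding theorem produces a col\ou ring $c : [\omega_1]^2 \longrightarrow \omega$ that realises $\omega_1 \doesntarrow [\omega_1]^2_\omega$ and contains no triangle whose three edges receive three different col\ou rs, which is exactly the assertion of the Corollary. I do not expect a genuine obstacle: the only points requiring a little care are the (harmless) reduction to a $\subseteq$-increasing cofinal family --- needed so that each chosen $f_\beta$ avoids all earlier $M_\alpha$ and not merely $M_\beta$ --- and the elementary equivalence of the two standard definitions of a Luzin set (every uncountable subset nonmeag\re, versus countable intersection with every meag\re\ set); both are routine.
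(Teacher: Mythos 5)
Your proposal is correct and follows essentially the same route as the paper: both arguments turn the cofinal family into a $\subseteq$-increasing one and diagonalize against it to produce a Luzin set, then feed that set into the preceding theorem. Your variant of the diagonalization (choosing $f_\alpha$ outside $M_\alpha$ together with the earlier points, via the Baire category theorem) is if anything slightly cleaner than the paper's choice of $x_\alpha\in B_{\alpha+1}\setminus B_\alpha$, and your verification that every uncountable subset is nonmeag\re\ is complete.
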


\begin{proof}
It suffices to show that there is a Luzin set under these hypotheses. We give the argument of this classical fact here.

Let $\seq{A_\alpha}{\alpha<\omega_1}$ be a cofinal family of meag\re\ sets. Let $B_\alpha=\bigcup_{\beta\leqslant\alpha}A_\beta$, so $\seq{B_\alpha}{\alpha<\omega_1}$ is a $\subseteq$-increasing cofinal family of meag\re\ sets. Thin out so that $B_\alpha\neq B_\beta$ if $\alpha\neq \beta$. For each $\alpha<\omega_1$, pick $x_\alpha\in B_{\alpha+1}\setminus B_\alpha$. Then $\soast{x_\alpha}{\alpha<\omega_1}$ is a Luzin set: none of its uncountable subsets are contained in any single $B_\alpha$.
\end{proof}

\section{Questions}
This paper leaves open many natural questions. On the relationship between cardinal invariants and $\stick$, even the following is not known:
\begin{question}
Does $\stick=\aleph_1$ imply $\mathfrak{b}=\aleph_1$?
\end{question}

We suspect that the assumption $\mathfrak{d}=\aleph_1$ appearing in the theorems above and in Larson \cite{998L0} can be reduced somewhat. As a starting point, we ask:
\begin{question}
Does $\stick = \aleph_1$ imply $\omega_1\omega^2 \doesntarrow (\omega_1\omega^2, 3)^2$ or even $\omega_1\omega \doesntarrow (\omega_1\omega, 3)^2$? What about $\mathfrak{b}=\aleph_1$?
\end{question}
\begin{question}
Does $\stick = \aleph_1$ imply $\omega_1^2 \doesntarrow (\omega_1\omega, 4)^2$? What about $\mathfrak{b}=\omega_1$?
\end{question}
The positive partition relations are known to follow from $\MA + \neg\CH$ (\cf\ Section \ref{section : introduction}) and can be forced with by iterating c.c.c. posets which add homogeneous sets to counterexamples using finite conditions. However, these posets destroy $\stick = \aleph_1$ and $\mathfrak{b}=\aleph_1$.

Perhaps easier would be to consider the polar\iz ed partition relation, where our basic question is:
\begin{question}
Does $\stick = \aleph_1$ imply $\binom{\omega_1}{\omega}\doesntarrow\binom{\omega_1}{\omega}^{1,1}_2$?
\end{question}

We would also be interested in any relationships of these relations with the category invariants.

Whether $\omega_1^2 \longrightarrow (\omega_1^2, 3)^2$ is consistent is a long-standing open question of Baumgartner. In our context, we would be interested in reducing the hypotheses used by Larson ($\mathfrak{d}=\aleph_1$) and Weinert--Lambie-Hanson ($\mathfrak{b}=\stick=\aleph_1$) to prove the negative relation. A natural mutual strengthening of these results would be:
\begin{question}
Does $\mathfrak{b} = \aleph_1$ imply $\omega_1^2 \doesntarrow (\omega_1^2, 3)^2$?
\end{question}

The arguments here from cardinal characteristics roughly calibrate the strength of negative partition relations. We can ask if there is a direct implication.
\begin{question}
Does $\omega_1^2\doesntarrow(\omega_1\omega,4)$ imply $\omega_1\omega\doesntarrow(\omega_1\omega,3)^2$?
\end{question}

\begin{question}
Does $\omega_1^2\doesntarrow(\omega_1\omega,4)\wedge \omega_1^2\doesntarrow(\omega_1^2,3)$ imply $\omega_1\omega\doesntarrow(\omega_1\omega,3)^2$?
\end{question}

In this paper, we focused on $\omega_1$ for the value of the cardinal characteristic in the hypothesis and for the resources in the partition relations. However, it may be possible that this is not necessary.

\begin{question}
Does $\mathfrak{b} \leqslant \stick$ hold by way of $\ZFC$? For which cardinals $\kappa$ does $\lambda = \kappa^+$ and $\stick(\kappa) = \lambda$ imply the existence of an unbounded family of cardinality $\soafft{\lambda}{\lambda}$?
\end{question}

\begin{question}
Is $\min(\cov(\meagre), \cov(\mzero)) \leqslant \stick$?
\end{question}


\nocite{014BR0}
\nocite{012GS0}

\bibliography{thilo}
\bibliographystyle{thilo3}
\end{document}
}}